\documentclass{article}

\usepackage{PRIMEarxiv}

\usepackage[dvipsnames]{xcolor}

\usepackage[utf8]{inputenc} % allow utf-8 input
\usepackage[T1]{fontenc}    % use 8-bit T1 fonts
\usepackage{hyperref}       % hyperlinks
\usepackage{url}            % simple URL typesetting
\usepackage{booktabs}       % professional-quality tables
\usepackage{amsfonts}       % blackboard math symbols
\usepackage{nicefrac}       % compact symbols for 1/2, etc.
\usepackage{microtype}      % microtypography
\usepackage{lipsum}
\usepackage{fancyhdr}       % header
\usepackage{graphicx}       % graphics
\graphicspath{{media/}}     % organize your images and other figures under media/ folder

\usepackage{amsmath,amssymb,amsfonts}
\usepackage{algorithmic}
\usepackage{graphicx}

\usepackage{algorithm}
\usepackage[english]{babel}

\usepackage{comment}

\usepackage{tkz-euclide}
\usepackage{pst-plot}

\usepackage{caption}
\usepackage{subcaption}
\usepackage{multirow}

\usepackage{tikz}
\usepackage{pgfplots}
\pgfplotsset{compat = newest}

\usetikzlibrary{spy}

\usepackage{amsthm}

\newtheorem{theorem}{Theorem}[section]
\newtheorem{corollary}{Corollary}[theorem]
\newtheorem{lemma}[theorem]{Lemma}

\newtheorem{definition}{Definition}[section]

\theoremstyle{remark}
\newtheorem*{remark}{Remark}

\usepackage{comment}

\usetikzlibrary{positioning}

\newcommand{\R}{\mathbb{R}} %Real numbers
 %Real numbers
\newcommand{\dd}{\mathrm{d}} %upward d
 %komma

% We fix the default style to have the indents that SIAM wants before
% theorem and proof definitions.

%Header
\pagestyle{fancy}
\thispagestyle{empty}
\rhead{ \textit{ }} 

\fancyhead[LO]{Learned RESESOP for solving inverse problems with inexact forward operator}

%% Title
\title{Learned RESESOP for solving inverse problems with inexact forward operator}

\author{
  Mathias S. Feinler, Bernadette N. Hahn \\
  Department of Mathematics \\
  University of Stuttgart \\
  Germany\\
  \texttt{\{mathias.feinler, bernadette.hahn\}@imng.uni-stuttgart.de} \\
  }

\begin{document}
\maketitle

\begin{abstract}
	When solving inverse problems, one has to deal with numerous potential sources of model inexactnesses, like object motion, calibration errors, or simplified data models. Regularized Sequential Subspace Optimization (ReSeSOp) allows to compensate for such inaccuracies within the reconstruction step by employing consecutive projections onto suitably defined subspaces. However, this approach relies on a priori estimates for the model inexactness levels which are typically unknown. In dynamic imaging applications, where inaccuracies arise from the unpredictable dynamics of the object, these estimates are particularly challenging to determine in advance. To overcome this limitation, we propose a learned version of ReSeSOp which allows to approximate inexactness levels on the fly. The proposed framework generalizes established unrolled iterative reconstruction schemes to inexact forward operators and is particularly tailored to the structure of dynamic problems. We also present a comprehensive mathematical analysis regarding the effect of dependencies within the forward problem, clarifying when and why dividing the overall problem into subproblems is essential. The proposed method is evaluated on various examples from dynamic imaging, including datasets from a rheological CT experiment, brain MRI, and real-time cardiac MRI. The respective results emphasize improvements in reconstruction quality while ensuring adequate data consistency.

\end{abstract}

\keywords{inexact forward operator \and sequential subspace optimization \and Deep CNN \and inverse problems in dynamic imaging \and CT \and MRI }

\section{Introduction}

Solving an inverse problem means extracting a searched-for quantity $s^\mathrm{ref}\in X$ from measured data $y^\delta\in Y$ where $y^\delta=\mathcal{A}s^\mathrm{ref} + n$ with measurement noise $n$ of a certain distribution, e.g. Gaussian, and with forward operator $\mathcal{A}:X\to Y$ modelling the data acquisition process. A classic example is the task of image reconstruction in tomography. Inverse problems are in general ill-posed, i.e. in order to control the influence of the data error $n$ on the solution, suitable regularization methods need to be applied. 
Furthermore, an accurate mathematical description of the forward operator $\mathcal{A}$ is mandatory to compute a regularized solution. However, in many applications, the underlying data acquisition process can only be modelled up to a certain degree or an accurate modelling might be too computationally expensive to be used within the reconstruction step. In these cases, one has to use an idealized forward operator $\mathcal{A}^\eta$. 

In computerized tomography (CT), for instance, the standard forward model used for reconstruction is the Radon (or ray) transform which however ignores beam hardening or scattering effects. In magnetic resonance imaging (MRI), the Fourier transform is typically used as forward operator, even though the Bloch-equations describe the data generation process more accurately. Also technical imprecisions result in inexact forward models, such as, e.g. in MRI, temperature fluctuations, field inhomogeneities \cite{fessler2004}, cross talk, saturations, chemical shifts, spin history effects \cite{Gibby2000}, etc. In \cite{blanke2020}, it was further shown that unknown motion of the searched-for quantity during the data acquisition can be interpreted as an inexactness in the forward model.

This motivates that methods for computing a regularized solution are needed which can account for inaccuracies in the forward model.  

\subsection{Interpreting the inexactness as noise}

A first intuitive approach is to use the idealized forward operator $\mathcal{A}^\eta$ instead of $\mathcal{A}$ and to interpret the inexactness as additional noise term $n_\mathcal{A}$. This shifts the problem of accounting for model uncertainties to studying suitable noise models for specific applications. For MRI, appropriate noise models are for instance analyzed in \cite{macovski1996}. Assuming the additional noise term to be Gaussian, the logarithmic MAP estimator can be found by minimizing the squared data error \cite{scherzer2008} together with prior information on the image $s$, for instance sparsity in the wavelet basis for undersampled MRI \cite{lustig2007}. However, in general, this assumption will not be satisfied. Imprecise coil sensitivities or object motion, for example, are not well modelled by Gaussian noise (nor by any noise with known distribution).  

\subsection{Explicit estimation strategies to account for the inexactness}
To account for object motion in dynamic inverse problems or imprecise coil sensitivities in MRI, 
an adaption of the forward operator is possible by explicit estimation procedures. The amount of motion estimation algorithms is quite extensive and an active field of research. However, such strategies are typically developed for individual applications, individual sampling schemes and special types of dynamics \cite{spieker2023, zaitsev2015, cordero2016, feinler2023, feinler2024, burger2018} and are often computationally demanding. In addition, incorporating explicit motion information can still cause model inaccuracies (either due to their numerical computation or a simplified modelling of the dynamics). In contrast, only a few coil estimation and enhancement algorithms have been developed  \cite{allison2012, griswold2006, uecker2014, peng2022, chen2023}.

\subsection{Adapting data consistency terms}

Accounting for model inexactnesses is in particular relevant since most regularization methods rely on the concept of \emph{data consistency} of some sort. Data consistency can be enforced by introducing a cost functional
\begin{equation}
	\mathcal{C}(s) = \| y^\delta - \mathcal{A}^\eta(s) \|^2 \nonumber
\end{equation}
with suitable norm $\|\cdot\|$. If the forward operator is exactly known (i.e. $\mathcal{A}^\eta = \mathcal{A}$) and if the measured data is exact, the searched-for reconstruction should minimize the data consistency term. In case of measurement noise, however, the best solution, can only satisfy data consistency up to the noise level. 

Similarly, if the forward operator $\mathcal{A}$ is not exactly known, using a data consistency term $\mathcal{C}$ with simplified model operator can introduce artefacts and errors in the computed solution. Considering analytical methods like CG-SENSE in MRI \cite{pruessmann1999}, which incorporate such data consistency terms, we can observe, for instance, strong aliasing effects due to inaccurate or dependent coil sensitivities when high acceleration factors are employed. Similarly, ignoring the dynamic behavior of the object can result in reconstructions with strong motion artefacts, see e.g. \cite{zaitsev2015}.

Data consistency terms are also widely used in iterative solution schemes or variational regularization methods which aim at minimizing functionals of type \begin{equation}
	\mathcal{C}(s) + \gamma \, \mathcal{T}(s), \quad \gamma > 0 \nonumber
\end{equation} with additional penalty functional $\mathcal{T}$ and regularization parameter $\gamma$. This idea is further transferred to learned methods, like variational networks \cite{hammernik2018}, or NETT \cite{li2019}, where the penalty term $\mathcal{T}$ is learned. The minimization of such functionals requires in particular evaluations of the gradient of the data consistency term and the penalty term. The iteration is typically unrolled for a fixed number of iterates, where the step-size is learned as well.

The learned primal and learned primal-dual algorithms of Adler et al. \cite{adler2018} do not explictly learn the penalty term $\mathcal{T}$ but directly approximate its gradient by a neural network. Additionally, the gradient of the data consistency term is passed to a neural network allowing for more versatile adaptions than just the bare scaling with a step-size c.f. variational networks. 

All of the mentioned learned reconstruction methods can be represented by an update scheme of the following type (ignoring memory lanes)
\begin{align}
	d^{(k+1)} &= \mathcal{D}_{\Xi^{(k)}}(\mathcal{A}^\eta s^{(k)}, y^\delta, d^{(k)}) \nonumber \\
	s^{(k+1)} &= s^{(k)} + \mathcal{P}_{\theta^{(k)}}(s^{(k)}, \mathcal{A}^{\eta *} d^{(k+1)}) \nonumber
\end{align}
where $\mathcal{P}_{\theta^{(k)}}$ is the primal network with parameters $\theta^{(k)}$, $\mathcal{D}_{\Xi^{(k)}}$ is the dual network with parameters $\Xi^{(k)}$, and $d^{(k)}$ represents the dual variable with the same dimension as $y^\delta$. For the learned primal version it holds $$\mathcal{D}_{\Xi^{(k)}}(\mathcal{A}^\eta s^{(k)}, y^\delta, d^{(k)}) = y^\delta - \mathcal{A}^\eta s^{(k)} $$ and for variational networks, the primal network further reduces to $$ \mathcal{P}_{\theta^{(k)}}(s^{(k)}, \mathcal{A}^{\eta *} d^{(k+1)}) = \varphi_k \mathcal{A}^{\eta *} d^{(k+1)} + \mathcal{P}_{\theta^{(k)}}(s^{(k)})$$
with learned stepsize $\varphi_k$.
All parameters are optimized for a fixed number of iterations.

Variational Networks are the strictest analogy to variational regularization methods. These learn a regularization term for each cascade of an unrolled optimization procedure. The learned primal network additionally learns to adapt and filter gradients. Empirically, gradients are denoised and spatially filtered. Further, memory lanes are used. These allow to learn higher order optimization schemes c.f. CG. Since more than one iterate is memorized, the network can effectively \emph{choose} where to evaluate the data consistency term in order to grasp previously untouched data-encoded information. 

Since data is a realization of a certain distribution, learning in dual space can be beneficial. Especially in CT the visual appearance of sinograms allows to eliminate inconsistent measurements. The main interpretation of these networks usually comprises that missing data is completed and noise is removed. However, this can also be seen as an adaption of the true gradient to a projection step. Therefore, data is not only completed, but also altered in such ways that the backprojection step is more advantageous than just the analytic data consistency gradient. These additional degrees of freedom empirically improve reconstruction quality, with the downside of loosing strict data consistency.

\subsection{Regularized sequential subspace optimization (ReSeSOp)}

A rigorous way to account for the inexactness of forward operators in the reconstruction step provides the framework of regularized sequential subspace optimization (ReSeSOp) \cite{blanke2020, schoepfer2008}. The underlying idea of this iterative procedure is to sequentially project from some initial value onto suitable subspaces that contain the searched-for solution. In \cite{blanke2020}, it was shown that these subspaces can be chosen in accordance to the model inexactness (and the noise level) such that the sequence of iterates generated by the imperfect model $\mathcal{A}^\eta$ (and the noisy data $y^\delta$) converges to a solution of $\mathcal{A} s = y$ as noise and inexactness level tend to zero. 

In particular, the framework can take local model errors depending on the data variable into account. This is especially relevant in dynamic imaging applications. For instance, for a periodic motion, the inexactness might vary over time, or in case of local deformations, might be restricted to data points covering a specific region of the object. To this end, we split $\mathcal{A} s = y$ into subproblems 
\begin{equation}
	\mathcal{A}_i s = y_i, \quad i=1,\dots,N^\mathrm{dir} \nonumber
\end{equation} and account for the local model inexactness levels
\begin{equation}
	\|\mathcal{A}^\eta_i - \mathcal{A}_i\| \leq \eta_i, \quad i=1,\dots,N^\mathrm{dir} \nonumber
\end{equation} by combining ReSeSOp with Kaczmarz‘ method, see \cite{blanke2020} for the detailed formulation and analysis.

The main drawback of this framework is that it requires upper bounds for the (local) model inexactnesses as a priori information, i.e. good estimates for the inexactness levels $\eta_i$. This is however challenging to acquire in practice. For individual applications, e.g. dynamic image reconstruction in magnetic particle imaging (MPI), the required inexactness levels can be obtained directly from the measured data \cite{nitzsche2024}. However, this procedure relies heavily on the specific data acquisition protocoll in MPI and hence cannot be generalized.

Thus, our goal in this paper is to find a general way to estimate the inexactness levels. We observe that with the known ground truth solution, we can directly compute a sharp bound for the overall error caused by using the simplified model operator and the noisy measurements. Obviously, this is infeasible in practical applications (since the ground truth is the searched-for quantity). Nevertheless, this observation opens the way for the use of supervised learning techniques. 

The subspaces used within the ReSeSOp framework not only depend on the above described error estimates but also on prescribed \emph{search directions}. In order to ensure convergence, these should be chosen as gradient of the least squares functional \begin{equation}
	\frac{1}{2} \| \mathcal{A}^\eta s-y^\delta\|^2 \nonumber
\end{equation} evaluated at the previous iterates, cf. \cite{blanke2020}. In particular, this choice ensures data consistency up to the model inexactness and the noise level. In this article, we propose to combine these classical search directions with an additional learned regularization part and to extend ReSeSOp to a joint iterative reconstruction and inexactness estimation network.

\subsection{Outline of the paper}

The paper is organized as follows. Section \ref{section:Theory} first recalls the fundamental idea and the mathematical framework of ReSeSOp. In section \ref{section:Methods} we develop our \emph{learned ReSeSOp} approach. In particular, we discuss that the in practice unavailable inexactnesses and unknown additional regularizers make a learned algorithm necessary and derive a respective strategy. Furthermore, we discuss how redundancies in the measured data make a ReSeSOp approach with multiple subproblems mandatory. This learned ReSeSOp approach is evaluated in section \ref{section:Results} on examples from dynamic imaging applications in CT and MRI. More precisely, we present results for simulated CT data for a rheological flow experiment, the fastMRI dataset as well as a real dynamic data set from spiral heart MRI. These demonstrate that the reconstruction quality improves compared to static reconstruction schemes like the filtered back projection (FBP) in CT or CG-SENSE in MRI, and surpasses the performance of other learned reconstruction schemes like learned primal \cite{adler2018} or GANs \cite{usman2020}. Finally, we conclude our article in section \ref{section:Conclusion}.

\section{Our solution framework using regularized sequential subspace optimization}\label{section:Theory}

In this section, we derive the starting point for our learned ReSeSOp method. 

\subsection{The basic principle of ReSeSOp}
In the following, we provide a brief summary of the regularized sequential subspace optimization method (ReSeSOp) for solving inverse problems with inexact model operator and noisy data. A detailed description can be found in \cite{blanke2020} and the references therein.  

Let  $\mathcal{A}:X\to Y$ be a bounded linear operator with Hilbert spaces $X$ and $Y$ equipped with scalar products $\langle \cdot,\cdot\rangle_X$, $\langle \cdot,\cdot\rangle_Y$ and respective norms $\| \cdot \|_X$ and $\| \cdot \|_Y$. If there is no ambiguity, we will omit the indices on the norms and inner products to simplify the notation. By $\mathcal{A}^*$ we denote the adjoint of $\mathcal{A}$.

For given $y\in Y$, the set 
\begin{displaymath}
	\mathcal{M}_{\mathcal{A},y} := \lbrace s \in X \, : \, \mathcal{A} s = y \rbrace
\end{displaymath}
denotes the \emph{solution set} of the operator equation $\mathcal{A} s = y$.

The underlying idea of sequential subspace optimization is to create a sequence of iterates $s^{(k)}\in X$ that converges to a solution $s \in \mathcal{M}_{{A},y}$ by sequentially projecting a starting value $s^{(1)}\in X$ onto suitable convex subsets of $X$.

Hyperplanes and stripes play an important role in this context and are defined as follows.

\vspace*{2ex}

\begin{definition}
	Let $u \in X\setminus \lbrace 0 \rbrace$ and $\alpha, \xi \in \mathbb{R}$ with $\xi \geq 0$. We call 
	\begin{displaymath}
		H(u,\alpha) := \left\lbrace f \in X \, : \, \langle u,f \rangle = \alpha \right\rbrace
	\end{displaymath}
	a \emph{hyperplane} in $X$. The set
	\begin{displaymath}
		H_{\leq}(u,\alpha) := \left\lbrace f \in X \, : \, \langle u,f \rangle \leq \alpha \right\rbrace
	\end{displaymath} 
	below $H(u,\alpha)$ is called a \emph{half-space}. Analogously we define the half spaces $H_{\geq}(u,\alpha)$, $H_{<}(u,\alpha)$, $H_{>}(u,\alpha)$. Finally, we define the \emph{stripe}
	\begin{displaymath}
		H(u,\alpha,\xi) := \left\lbrace f \in X \, : \, \big| \langle u,f \rangle - \alpha \big| \leq \xi \right\rbrace
	\end{displaymath}
	with \emph{upper bounding hyperplane} $H(u,\alpha + \xi)$ and \emph{lower bounding hyperplane} $H(u,\alpha - \xi)$. 
\end{definition}

A direct calculation shows that for arbitrary $w\in Y$
\begin{displaymath}
	\mathcal{M}_{\mathcal{A},y} \subset H(\mathcal{A}^*w,\langle w,y \rangle).
\end{displaymath}

\subsubsection{The case of exact data and exact operator}
First, we outline the classic sequential subspace optimization algorithm for solving $\mathcal{A} s =y$ with known forward operator $\mathcal{A}$ and exactly measured data $y$. The $k$-th iteration step reads
$$ s^{(k+1)} = P_{H_k} (s^{(k)}),$$
where $P_{H_k}$ denotes the metric projection onto the intersection of hyperplanes $$ H_k := \bigcap_{j\in J_k} H(\mathcal{A}^*w^{(k)}_{j}, \langle w^{(k)}_{j}, y\rangle)$$
with chosen index set $J_k$ and chosen $w^{(k)}_{j}\in Y$. The elements $u^{(k)}_{j}:=\mathcal{A}^*w^{(k)}_{j}$ are called \emph{search directions}. The value $|J_k|$ describes the number of search directions used in the $k$-th iteration step. 

\vspace*{2ex}

In Hilbert spaces, the metric projection $P_{H(u,\alpha)}(x)$ of $x \in X$ onto a hyperplane $H(u,\alpha)$ coincides with the orthogonal projection and can be expressed as
\begin{equation}
	P_{H(u,\alpha)}(x) = x - \frac{\langle u,x \rangle - \alpha}{\| u \|^2} u. \nonumber
\end{equation}

\vspace*{2ex}

Thus, in case of one search direction $u^{(k)}:=\mathcal{A}^* w^{(k)}$ with given $w^{(k)}\in Y$ in the $k$-th iteration step, $s^{(k+1)}$ reads
\begin{equation} 
	s^{(k+1)} = s^{(k)} - \frac{\langle \mathcal{A}^*w^{(k)},s^{(k)} \rangle - \langle w^{(k)}, y\rangle}{\| u^{(k)} \|^2} u^{(k)}. \label{eq:sesop2} 
\end{equation} 

\subsubsection{ReSeSOp for noisy data and inexact operator}\label{subsec:ReSeSOp1}

Now we consider the setting where only noisy data $y^\delta$ with noise level $\delta$, i.e. $$\|y-y^\delta\| = \|n\| \leq \delta,$$ 
and an idealized version $\mathcal{A}^\eta$ of the forward operator $\mathcal{A}$ with inexactness level $\eta$, i.e. $$\|\mathcal{A}-\mathcal{A}^\eta\|\leq \eta$$ are given. 
Consider the solution set 
\begin{displaymath}
	\mathcal{M}^{\rho}_{\mathcal{A},y} :=  \mathcal{M}_{\mathcal{A},y} \cap B_{\rho}(0),
\end{displaymath}
where $B_{\rho}(0)$ is the open ball with radius $\rho > 0$ around $0$. For each $z\in \mathcal{M}^{\rho}_{\mathcal{A},y}$, it holds
\begin{align*}
	\|\mathcal{A}^\eta z-y^\delta\| &= \|(\mathcal{A}^\eta-\mathcal{A})z + A z -y^\delta\|\\
	&\leq \|\mathcal{A}^\eta-\mathcal{A}\|\,\|z\| + \|\mathcal{A}z-y^\delta\|\\
	&\leq \eta\,\rho + \delta.
\end{align*}
Thus, for arbitrary $w\in Y$, the solution set $\mathcal{M}^{\rho}_{\mathcal{A},y}$ is contained in the stripe 
\begin{align*}
	H^{\delta,\eta,\rho} &:= H\big((\mathcal{A}^{\eta})^*w, \langle w,y^\delta\rangle, (\delta + \eta\rho)\| w \|\big) \\
	&= \left\lbrace x \in X \, : \, \left\|\left\langle (\mathcal{A}^{\eta})^*w,x \right\rangle - \left\langle w,y^\delta \right\rangle \right\| \leq (\delta + \eta\rho) \| w \| \right\rbrace,
\end{align*}
cf. \cite{blanke2020}. This motivates to replace the metric projection onto hyperplanes in \eqref{eq:sesop2} by the metric projection onto stripes whose width is chosen in accordance to the noise and inexactness level.

The metric projection onto a stripe corresponds to the metric projection onto the respective bounding hyperplane. Since both an upper bounding hyperplane and a lower bounding hyperplane exist, it is essential to determine which one to select for the metric projection. Either option satisfies the required level of inexactness. However, the upper bounding hyperplane is positioned closest to the current iterate. This proximity results in smaller adjustments to the iterate. In the light of accounting for local inexactnesses, cf. section \ref{subsec:local_model_inexactnesses}, the choice of the upper bounding hyperplane further minimizes the interference with other subproblems. 

Altogether, we obtain the iteration scheme
\begin{equation} 
	s^{(k+1)} = s^{(k)} -  \frac{\langle (\mathcal{A}^\eta)^*w^{(k)},s^{(k)} \rangle - \langle w^{(k)},y^\delta\rangle- (\delta + \eta\rho)\| w^{(k)} \|}{\| u^{(k)} \|^2} u^{(k)} \label{eq:ReSeSOp1suchr}
\end{equation}
with chosen $w^{(k)} = w(s^{(k)})\in Y$ and $u^{(k)} = u(s^{(k)}):=(\mathcal{A}^\eta)^* w^{(k)}$. In particular, the choice $$w(s^{(k)}):= \mathcal{A}^\eta s^{(k)}-y^\delta$$ ensures convergence and - in combination with the discrepancy principle - the regularization property of the iteration scheme \eqref{eq:ReSeSOp1suchr} which has been studied in detail in \cite{blanke2020}. With this particular choice for $w^{(k)}$, we can further simplify the expression on the right-hand side yielding
\begin{equation}
	s^{(k+1)} = s^{(k)} -  \frac{\| w(s^{(k)}\|\, \left(\| w(s^{(k)}) \| - (\delta+\eta\rho)\right) }{\| u(s^{(k)}) \|^2} u(s^{(k)}). \label{eq:projection_simplified}
\end{equation}

\subsection{Application in dynamic imaging and accounting for local model inexactnesses}\label{subsec:local_model_inexactnesses}

In many practical applications, collecting the measurement $y$ takes a considerable amount of time. In computerized tomography, for instance, it takes time to rotate the radiation source around the investigated object. In MRI, the generation of spin echos and the encoding of k-space trajectories can stress even longer acquisition times. Within the standard formulation of static inverse problems $\mathcal{A}^\mathrm{stat}f=y$ (please note that we write here $f$ as symbol for the searched-for quantity), 
this can be expressed by specifying the data variable, i.e., the equation 
more precisely reads 
\begin{equation}
	\mathcal{A}^\mathrm{stat} f (t,\nu) = y(t,\nu)\quad \textrm{for all} \ (t,\nu)\in [0,T]\times \Omega_Y \nonumber
\end{equation}
with time interval $[0,T]\subset \mathbb{R}$ covering the time period required for collecting the data, and $\Omega_Y$ could be considered, e.g., as bounded subset of $\mathbb{R}^m$. 
To explicitly indicate the dependence on $t$, we write
$$ \mathcal{A}_t^{\mathrm{stat}} f (\cdot) := \mathcal{A}^{\mathrm{stat}} f (t,\cdot).$$
In MRI, for instance, the forward model of the static problem is given by
\begin{equation}
	\mathcal{A}_{t,c}^{\mathrm{stat}}[ \cdot] := M_t \mathcal{F}[S_c \cdot], \label{eq:MRI_forward}
\end{equation}
where $\mathcal{F}$ is the Fourier transform, $\{S_c\}_{c=1}^{N^\mathrm{c}}$ are a priori estimated coil sensitivities and $M_t$ describes the sampling pattern at time $t\in[0, T]$. 
In case of CT, the static operator is considered as
\begin{equation}
	\mathcal{A}_{t}^{\mathrm{stat}}[ \cdot] := M_t \mathcal{R}[ \cdot], \label{eq:CT_forward}
\end{equation} 
where $\mathcal{R}$ is the Radon transform \cite{natterer2001} and $M_t$ encodes the angular sampling pattern at time $t$.

In many applications, the searched-for quantity $f$ changes during the time of the data acquisition, for instance in medical imaging due to patient or organ motion or in non-destructive testing when visualizing fluid flow in porous media. 

Typically, the individual states of the specimen show a strong temporal correlation which can be expressed by a reference configuration $s$ and a deformation model $\Gamma : [0, T] \times \mathbb{R}^n \to \mathbb{R}^n$. Thus, the exact forward operator of the dynamic inverse problem for recovering $s$ from measured data $y$ is given by
$$ \mathcal{A} s (t,\nu) := \mathcal{A}^\mathrm{stat} [s\circ \Gamma(t,\cdot)] (t,\nu).$$
In particular, the true forward model $\mathcal{A}$ depends on the unknown deformation $\Gamma$ and is therefore in practice not exactly known. Thus, only a simplified forward operator $\mathcal{A}^\eta$ can be used within the reconstruction step. If no further information on the dynamic behavior is available, the intuitive idea is to use as simplified forward model the operator from the underlying static problem, i.e. $\mathcal{A}^\eta = \mathcal{A}^\mathrm{stat}$. In order to avoid artefacts in the reconstruction, the solution scheme should account for this model inexactness which can be achieved by the ReSeSOp algorithm presented in Section \ref{subsec:ReSeSOp1}. 

\vspace*{2ex}

However, in order to work efficiently, we need to extend the method to take local inaccuracies into account: In most cases, a global error $\|\mathcal{A}-\mathcal{A}^\eta\|\leq \eta$ will quantify the model deviation too pessimistically. As an example, consider the case of periodic motion, such as respiratory or cardiac motion. Due to the periodicity, it might hold $\mathcal{A}_t s =\mathcal{A}_t^\eta s$ for certain time instances, i.e., locally, the model error is small. Another significant example is that motion estimation strategies, e.g. \cite{cordero2016,feinler2023,feinler2024}, reduce motion artefacts particularly well in fully sampled regions, while the deformations in undersampled regions in the periphery are (usually) poorly approximated. Hence, suitable reconstruction algorithms have to adapt to locally imprecise motion estimates. Thus, we want to consider a framework that takes local model errors depending on the data variable into account. 

For this purpose, we follow the suggestion of \cite{blanke2020} and focus on a set of subproblems
\[\mathcal{A}_{i} s = y_{i}, \quad i = 1, \dots, N^\mathrm{dir}.\]
This allows to assign individual inexactness levels $\eta_i$ to the corresponding fractions of data $y_i$, $i=1,\dots,N^\mathrm{dir}$, i.e.
$$ \| \mathcal{A}_i - \mathcal{A}_i^\eta\| \leq \eta_i, \quad i=1,\dots,N^\mathrm{dir}.$$
Combining ReSeSOp with the classical Kaczmarz' method results in the iteration scheme presented in algorithm \ref{alg:classical_ReSeSOp}. 

\begin{algorithm}[h]
	\caption{$k$-th iteration of classical ReSeSOp}
	\label{alg:classical_ReSeSOp}
	\begin{algorithmic}[1]
		\STATE{Input: $s^{(k)}$}
		\STATE{Output: $s^{(k+1)}$}
		\STATE{Initialize: $\tilde{s}_1:=s^{(k)}$}
		\FOR{ $i=1,\ldots , N^\mathrm{dir}$}
		\STATE{$w_i = w(\tilde{s}_i):=\mathcal{A}^\eta_i \tilde{s}_i-y_i^\delta\in Y$}
		\STATE{$u_i = u(\tilde{s}_i):=(\mathcal{A}^\eta)_i^* w_i $} %\label{eq:search_directions}
		\STATE{$\tilde{s}_{i+1} = \tilde{s}_i - \frac{\langle (\mathcal{A}^\eta)_i^*w_i,\tilde{s}_i \rangle - \langle w_i,g_i^\delta\rangle- (\delta + \eta_i \rho)\| w_i \|}{\| u_i \|^2} \, u_i$} %\label{eq:single_projection}
		\ENDFOR
		\STATE{$s^{(k+1)} := \tilde{s}_{N^\mathrm{dir}+1}$}
	\end{algorithmic}
\end{algorithm}

The convergence of the sequence and its regularizing property (if combined with the discrepancy principle) follows directly from the general theory in \cite{blanke2020}. 

In particular, $u_i$ can be seen as a search-direction associated to the $i$-th subproblem $\mathcal{A}^\eta_i s = y^\delta_i$ and $(\tilde{s}_i)_{i=2,\dots,N^\mathrm{dir}}$ as intermediary iterates which are obtained by performing a metric projection for each individual subproblem.

\section{Learned ReSeSOp}\label{section:Methods}

The computation of the iterates requires in particular knowledge of estimates for the noise level $\delta$ and the inexactness levels $\{\eta_i\}_{i=1,\dots,N^\mathrm{dir}}$. If the exact solution $s^\mathrm{ref}$ of the unperturbed inverse problem $\mathcal{A}s=y$ was known, then we could compute
\begin{equation}
	\| \mathcal{A}_i^\eta s^\mathrm{ref}-y_i^\delta \| =: \mathcal{E}_i. \label{eq:inexactnesses}
\end{equation} 
A lower value for the residuals $\|\mathcal{A}_i^\eta \cdot - y_i^\delta\|$ cannot be expected from any computed solution. This motivates to use stripes
$$H^{\mathcal{E}_i}:= H((\mathcal{A}^\eta)^*w, \langle w,y^\delta \rangle, \mathcal{E}_i \|w\|) $$
within the ReSeSOp framework. This corresponds to finding the solution $z \in M_{\mathcal{A},y}^\rho$ with $\|\mathcal{A}_i^\eta z - y_i^\delta\|=\mathcal{E}_i$ for all $i$. 

However, precomputing $\mathcal{E}_i$ requires the ground truth solution which is of course not available in practice. In contrast, if using a supervised machine learning approach with ground truth solutions being part of the training data set, we could indeed compute such sharp inexactnesses. This is our motivation to use a suitable network architecture to learn the required inexactness levels from a database.

The original ReSeSOp algorithm uses search directions one after another. This requires many (intermediary) iterations and is computationally demanding. A more efficient approach is to include all search directions at once, i.e. to compute 
\begin{equation}
	s^{(k+1)} = s^{(k)} - \sum_{i=1}^{N^\mathrm{dir}} \kappa^{(k)}_i u_i(s^{(k)}) \label{eq:all_projection}
\end{equation} 
with suitable stepsizes $\{\kappa_i^{(k)}\}_{i=1}^{N^\mathrm{dir}}$ such that $\|\mathcal{A}^\eta_i s^{(k+1)} - y_i^\delta\| = \mathcal{E}_i$ for all $i$.

This is equivalent to the projection onto the intersection of $N^\mathrm{dir}$ stripes. Hence, in light of the original literature \cite{blanke2020}, we speak in this context of $N^\mathrm{dir}$ search directions in the following.

However, the computation of $\kappa^{(k)}_i$ is in general non-trivial and requires the knowledge of all inexactness levels $\{\mathcal{E}_j\}_j$, all search directions $\{u^{(k)}_j\}_j$ and all residuals $\{w^{(k)}_j\}_j$. 

By simple rearrangements, we can show that the computation of optimal $\kappa_i$ is equivalent to solving the following quadratic system of equations
\begin{equation}
	0 = a_i + \kappa^\intercal b_i + \kappa^\intercal C_i \kappa \quad \forall i \label{eq:quadratic_optimality_form}
\end{equation}
with $$a_i = \|w_i\|^2- \mathcal{E}_i^2, \quad b_i = -2\left(\langle w_i, \mathcal{A}^\eta_i u_j\rangle \right)_j, \quad C_i = \left(\langle \mathcal{A}^\eta_i u_j, \mathcal{A}^\eta_i u_l\rangle \right)_{j,l}.$$ 

In general, this problem is intricate to solve. Since we rely on iterative methods we do not need to find the exact solution but only need to converge sufficiently fast. Hence, we want to assign this task to a neural network. Note that convergence speed of iterative solution schemes can typically be increased by overrelaxation. The extend of optimal overrelaxation is usually not known. Hence, we leave the choice of intermediary $\kappa_i^{(k)}$ free for the network.

\subsection{ReSeSOp with an additional regularizer}\label{subsec:ReSeSOp_with_additional_reg}
Due to its construction, the ReSeSOp algorithm with exact inexactnesses c.f. equation \eqref{eq:inexactnesses} searches for a solution of $\mathcal{A} s = y^\delta$ within the set 
$$\mathcal{D} := \lbrace s\in X\cap B_\rho(0) \, : \, \|\mathcal{A}_i^{\eta}s-y_i^\delta\| = \mathcal{E}_i \quad \forall i \rbrace. $$ This set however can contain predominantly images with artefacts. Therefore, we propose to include an additional regularization within each iteration step. Formally, this can be repesented by adding one additional subproblem of type $$ {R} s = 0$$ with a prescribed regularization operator ${R}$. This corresponds to augmenting the forward operator and the data via
\begin{equation}\label{eq_augmentedsystem}
	\tilde{\mathcal{A}} = \left(\begin{array}{c}
		\mathcal{A}^\eta\\
		R
	\end{array} \right) \qquad 
	\tilde{y} = \left(\begin{array}{c}
		y^\delta \\
		0
	\end{array} \right)
\end{equation}
which allows us to find a \emph{$R$-regularized solution} within the ReSeSOp solution space $\mathcal{D}$, i.e. a solution $s^*$ with $Rs^*=0$, respectively $\|Rs^*\|\leq \mathcal{E}^\mathrm{R}$ with additional prescribed inexactness level $\mathcal{E}^\mathrm{R}$.

This inexactness parameter $\mathcal{E}^\mathrm{R}$ can be seen as an additional information on the solution. For instance, in imaging applications for material testing, we can use total variation and describe the \emph{total length of edges} within the material by $\mathcal{E}^\mathrm{R}$. Similarly, \emph{perfect} regularizers are zero for artefact-free images and assume large values for images with artefacts. For such regularizers, $\mathcal{E}^\mathrm{R}=0$ is the appropriate choice in order to ensure artefact-free reconstructions. 

In particular, system \eqref{eq_augmentedsystem} with respective model inexactness fits exactly into the classical ReSeSOp framework and the convergence properties studied in \cite{blanke2020} still apply.

A perfect regularizer $R$, however, depends on the space of ground truth images. Since these spaces are (apart from academic examples) analytically barely describable, we want to use a neural network to learn the appropriate regularization term. This idea follows \cite{li2019} which proposed to use learned regularizers within a variational framework and provided a respective convergence analysis.

\subsection{Learned ReSeSOp algorithm}

In this section we derive our learned ReSeSOp algorithm. This algorithm aims to learn the stepsizes $\{\kappa_i^{(k)}\}_{i=1}^{N^\mathrm{dir}}$ for each iteration step $(k)$ as well as regularizations $R^{(k)}$ to form the reconstruction of a predicted image $s^\mathrm{pred}$ as the final result. In order to receive an end-to-end trainable neural network, we unroll the iterative procedure for a fixed number of iterations $N^\mathrm{iter}$. Due to equation \eqref{eq:inexactnesses} we can also calculate predictions for the inexactness levels $\mathcal{E}_i$ by replacing $s^\mathrm{ref}$ with $s^\mathrm{pred}$. 

From our previous considerations, the respective network receives the current iterate $s^{(k)}$ and the respective search directions $\{u_i\}_{i=1}^{N^\mathrm{dir}}$ as inputs. Additionally, we provide the result of equation \eqref{eq:inexactnesses} to assess dependencies, where we replace $s^\mathrm{ref}$ with the current iterate $s^{(k)}$. Further, we want this network to feature memory lanes. In analogy to Adler et al. \cite{adler2018}, we provide $N^\mathrm{mem}=5$ memory lanes. In the same sense, we feed old stepsizes $\kappa_i^{(k-1)}$ into the network to enhance the calculation of new stepsizes $\kappa_i^{(k)}$.

The details of the procedure are specified in Algorithm \ref{alg:learned ReSeSOp}. This algorithm can be extended to learn in frequency space by adding a dual network in a similar fashion as in \cite{adler2018}.

\begin{algorithm}
	\caption{learned ReSeSOp}
	\label{alg:learned ReSeSOp}
	\begin{algorithmic}[1]
		\STATE{$\mathrm{Input}:  \, \, y_i, \,\, i = 1,\ldots,N^\mathrm{dir}$} 
		\STATE{$\mathrm{Output}:  \, \, s^\mathrm{pred} = s^{(N^\mathrm{iter}+1)} , \,\, \mathcal{E}_i^{(N^\mathrm{iter}+1)}, \,\,  i = 1,\ldots,N^\mathrm{dir}$}
		\STATE{$\mathrm{Initialize}: s^{(1)} = \mathrm{CG}(y), \,\, \kappa_i^{(0)} = 1, \,\, i=1,\ldots,N^\mathrm{dir}$}

		\FOR{ $k=1,\ldots,N^\mathrm{iter}$}
		\FOR{ $i=1,\ldots,N^\mathrm{dir}$}
		\STATE{$w_i^{(k)} = \mathcal{A}_i^{\eta}s^{(k)} - y_i$} %, \,\,  \,\,\,\,\quad\qquad i=1,\ldots,N^\mathrm{dir}$} 
	
	\STATE{$u_i^{(k)} = \mathcal{A}_i^{\eta *}w_i^{(k)}$} %, \,\, \qquad\qquad\qquad i=1,\ldots,N^\mathrm{dir}$}    

\STATE{$\mathcal{E}_{i}^{(k)} = \|\mathcal{A}_i^{\eta}s^{(k)} - y_i\| $} %,  \qquad i=1,\ldots,N^\mathrm{dir}$}     
\ENDFOR

\STATE{$\Big(R^{(k)}, \{\kappa^{(k)}_i\}_i \Big)= \mathrm{NN}_{\theta^{(k)}}(s^{(k)}, \{\kappa_i^{(k-1)}\}_i, \{\mathcal{E}_i^{(k)}\}_i, \{u_i^{(k)}\}_i)$}

\STATE{$s^{(k+1)} = s^{(k)} - R^{(k)} - \sum_i \kappa_i^{(k)} u_i^{(k)}$}     

\ENDFOR
\end{algorithmic}
\end{algorithm}

For the numerical results in this paper, we fix $N^\mathrm{iter}=8$ and $N^\mathrm{mem}=5$, and use two $\mathrm{CG}$ steps for initialization of $s^{(1)}$. The choice of initialization allows that $s^{(1)}$ is meaningful, but not fully converged, such that the search directions $u_i^{(1)}$ are not zero. Alternatively we can initialize $s^{(1)}$ with zero, which however requires more learned ReSeSOp steps to reach the same accuracy.

The number of search directions $N^\mathrm{dir}$ depends on the specific application. For dynamic scenarios, it can be sensible to split the operator into subproblems over time. In the same sense, we can split the operator into contributions per receiver coil for parallel MRI. In some cases, the structure of the operator allows a splitting into contributions over space. E.g. for MPI, we can split the operator into spatial contributions to account for inhomogeneities of the system matrix \cite{nitzsche2022}.

The Network $\mathrm{NN}_{\theta^{(k)}}$ is a U-Net \cite{ronneberger2015} with an appended encoder, parametrised by $\theta^{(k)}$. The first channel of the U-Net output is used as $R^{(k)}$. All U-Net output channels are convoluted and pooled down to the dimension of the inexactnesses. This is shown in figure \ref{fig:learned_ReSeSOp}. In practical applications, the hyperparameters of this architecture should be adapted to the actual size of the images and the number of considered subproblems. 

\begin{figure}
\centering
\includegraphics[width=\textwidth]{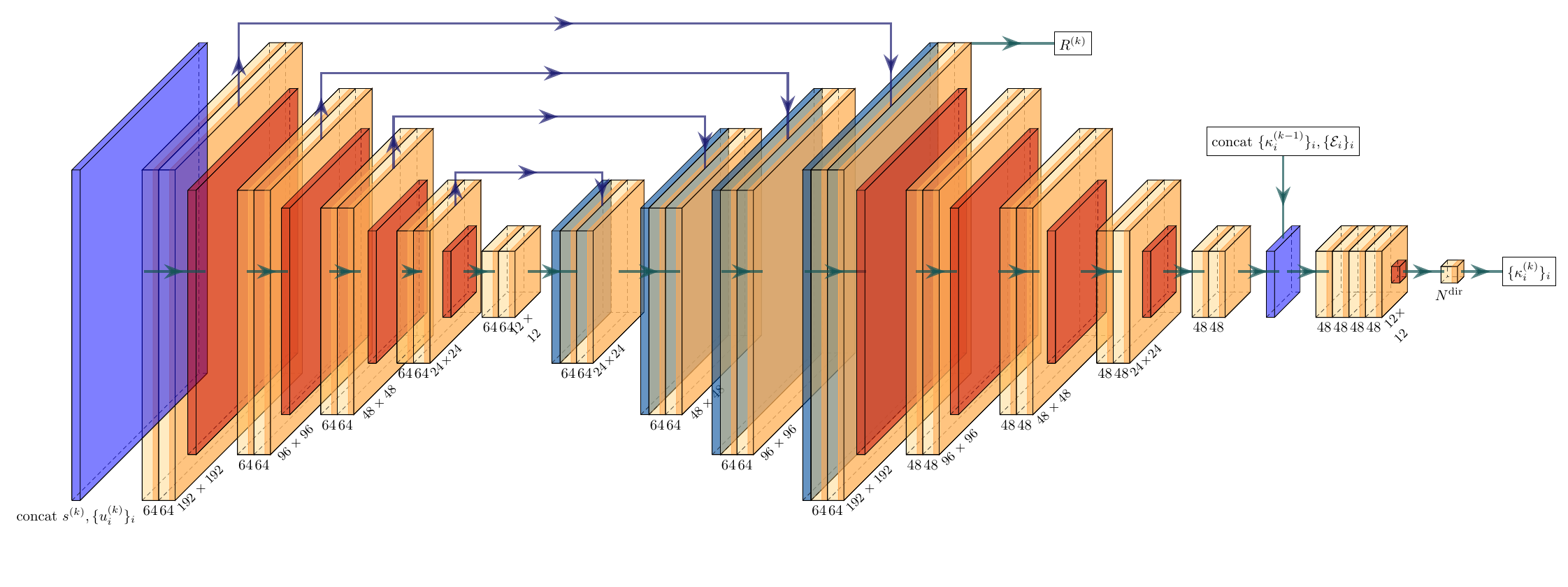}
\caption{$\text{NN}_\theta$ with parameters $\theta$ for images of resolution $N\times N=192 \times 192$. The outputs are the regularization term $R^{(k)}$ and stepsizes $\{\kappa_i^{(k)}\}_i$. The inputs are the iterate $s^{(k)}$, old stepsizes $\{\kappa_i^{(k-1)}\}_i$, current inexactness levels $\{\mathcal{E}_i^{(k)} = \|\mathcal{A}^\eta_i s^{(k)} - y_i\|\}_i$ and search directions $\{u_i^{(k)}\}_i$.}
\label{fig:learned_ReSeSOp}
\end{figure}

\subsection{Training}
In order to ensure that the prediction is close to the ground truth reference, we use the training loss
\begin{equation}
\mathcal{L}_x = \mathrm{SSIM} (s^\mathrm{ref}, s^\mathrm{pred}),
\end{equation}
where the structural similarity index (SSIM) can be replaced by other similarity metrics such as MSE, PSNR or further. As explained in the beginning of section \ref{section:Methods} the optimal stepsizes $\kappa_i^{(k)}$ are unknown for the regularized iterative procedure. Hence, we cannot restrict these by means of a loss function. However, optimal inexactnesses are known for the reconstruction $s^\mathrm{pred}$. These correspond to 
\begin{equation}
\mathcal{E}_i = \|\mathcal{A}^\eta_i s^\mathrm{ref} - y_i^\delta\| \nonumber,
\end{equation}
cf. equation \eqref{eq:inexactnesses}. 
Hence we can train the network using the additional training loss contribution
\begin{equation}
\mathcal{L}_\mathcal{E} = \sum_i (\mathcal{E}_i - \| y_i^\delta - \mathcal{A}^\eta_i s^\mathrm{pred} \| )^2. \label{eq:LossE}
\end{equation}
If $s^\mathrm{pred}$ satisfies $\|\mathcal{A}_i^\eta s^\mathrm{pred}-y_i^\delta\| = \mathcal{E}_i$ for all $i$, this means, in particular, that $s^\mathrm{pred}$ is consistent with the data and the model in the sense of ReSeSOp, i.e. up to the noise and model inexactness level. The training loss $\mathcal{L}_\mathcal{E}$ hence enforces the final solution to be contained in the manifold defined by the solution space $\mathcal{D} = \{s: \,\, \| w_i(s) \| = \mathcal{E}_i , \,\, i=1,\ldots ,N^\mathrm{dir}\}$.

We finally train the network end-to-end using Adam optimizer for several dozen epochs until convergence.

\subsection{ReSeSOp for problems with and without redundancies} \label{subsec:ReSeSOp_w_wo_redundncies}
In this subsection, we want to emphasize the importance of decomposing the full problem into subproblems and state which operator characteristics affect the necessity to do so.

\subsubsection{Orthogonal suboperators}

As we discussed before, the forward (imaging) operator $\mathcal{A}$ can be split into several suboperators $\mathcal{A}_i, \, i=1,\dots N^\mathrm{dir}$ (corresponding to the respective subproblems). 

\begin{definition}\label{def:orthogonal_suboperators}
Two suboperators $\mathcal{A}_i$ and $\mathcal{A}_j$ are called \emph{orthogonal} if
\begin{displaymath}
0 = \mathcal{A}_i \mathcal{A}_j^*.
\end{displaymath}
\end{definition}

\begin{lemma}
If all suboperators are pairwise orthogonal, we can treat subproblems individually.
\end{lemma}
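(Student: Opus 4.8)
The plan is to show that orthogonality of the suboperators renders the search directions of distinct subproblems mutually orthogonal in $X$, so that the projection step for one subproblem leaves the data consistency of every other subproblem untouched. The starting point is the elementary consequence of Definition~\ref{def:orthogonal_suboperators}: since the search directions are $u_i = (\mathcal{A}^\eta_i)^* w_i$, the orthogonality relation $\mathcal{A}^\eta_i(\mathcal{A}^\eta_j)^* = 0$ for $i \neq j$ yields
\[
\mathcal{A}^\eta_i u_j = \mathcal{A}^\eta_i (\mathcal{A}^\eta_j)^* w_j = 0
\qquad\text{and}\qquad
\langle u_i, u_j\rangle = \langle w_i, \mathcal{A}^\eta_i (\mathcal{A}^\eta_j)^* w_j\rangle = 0 .
\]
Here the orthogonality is understood for the operators $\mathcal{A}^\eta_i$ that actually build the search directions; verifying that Definition~\ref{def:orthogonal_suboperators}, stated for the exact $\mathcal{A}_i$, transfers to the inexact $\mathcal{A}^\eta_i$ is the one point that requires care.

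Next I would translate these two identities into the two senses in which the subproblems decouple. For data consistency: if the iterate is moved along $u_j$ only, then for every $i \neq j$
\[
\mathcal{A}^\eta_i\big(s - \kappa_j u_j\big) - y_i^\delta
= \big(\mathcal{A}^\eta_i s - y_i^\delta\big) - \kappa_j \mathcal{A}^\eta_i u_j
= w_i ,
\]
so the residual $w_i$, and hence the stripe constraint $\|\mathcal{A}^\eta_i \cdot - y_i^\delta\| = \mathcal{E}_i$ of subproblem $i$, is invariant under the update associated with subproblem $j$. Consequently the sequential Kaczmarz sweep of Algorithm~\ref{alg:classical_ReSeSOp} never undoes a projection performed at an earlier index, and its output is independent of the order in which the indices are processed.

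Finally I would show that the simultaneous update \eqref{eq:all_projection} collapses to the individual projections. Because $\mathcal{A}^\eta_i u_j = 0$ for $j \neq i$, in the quadratic system \eqref{eq:quadratic_optimality_form} only the $i$-th component of the vector $b_i$ and the $(i,i)$-entry of the matrix $C_i$ survive, equal to $-2\langle w_i, \mathcal{A}^\eta_i u_i\rangle$ and $\|\mathcal{A}^\eta_i u_i\|^2$ respectively. Hence the $i$-th equation reduces to the scalar quadratic
\[
0 = \big(\|w_i\|^2 - \mathcal{E}_i^2\big) - 2\kappa_i \langle w_i, \mathcal{A}^\eta_i u_i\rangle + \kappa_i^2 \|\mathcal{A}^\eta_i u_i\|^2
\]
in $\kappa_i$ alone, which is exactly the condition determining the stepsize of the single stripe projection for subproblem $i$. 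Thus each $\kappa_i$, and with it each projection, can be computed in isolation, which is precisely the assertion that the subproblems may be treated individually.

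I expect the only genuine subtlety to be making the informal phrase ``treat subproblems individually'' precise: the mathematical content is that the projection corrections lie in mutually orthogonal subspaces, so the metric projection onto the intersection of the stripes factorizes as the (order-independent) composition of the single-stripe projections. Once this is fixed as the target, each step above is a one-line computation; the exact-versus-inexact caveat flagged in the first paragraph is the place where attention is needed, since the decoupling of the \emph{algorithm} relies on orthogonality of the $\mathcal{A}^\eta_i$ rather than merely of the $\mathcal{A}_i$.
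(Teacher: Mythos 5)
Your proof is correct and follows essentially the same route as the paper's: you insert the orthogonality relation $\mathcal{A}_i\mathcal{A}_j^*=0$ into the quadratic optimality system \eqref{eq:quadratic_optimality_form} to decouple it into scalar equations for each $\kappa_i$, and you verify that an update along $u_j$ leaves the residual of subproblem $i\neq j$ unchanged, which is exactly the paper's two-part argument. Your added remark that the orthogonality must be read for the operators $\mathcal{A}^\eta_i$ actually used in the algorithm is a fair observation about the paper's notation, but it does not change the substance of the proof.
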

\begin{proof}
If we insert definition \ref{def:orthogonal_suboperators} into \eqref{eq:quadratic_optimality_form}, we receive
\begin{displaymath}
\mathcal{E}_i^2 = \| w_i \|^2 - 2\kappa_i \langle w_i, \mathcal{A}_i u_i \rangle + \kappa_i^2 \langle \mathcal{A}_i u_i, \mathcal{A}_i u_i \rangle
\end{displaymath}
or equivalently
\begin{equation}
\mathcal{E}_i^2 = \| y_i - \mathcal{A}_i(s^{(k)} + \kappa_i u_i) \|^2. \label{eq:simplified_quadratic_optimality_form}
\end{equation}
Hence, the determination of the stepsize $\kappa_i$ is independent from the determination of stepsizes $\kappa_j$ for all $j \not= i$. Due to the orthogonality, the update corresponding to subproblem $i$ does not interfere with the update corresponding to subproblem $j \not= i$. This means if $\mathcal{E}_i = \| y_i - \mathcal{A}_i(s^{(k)}) \|$, then also $s^{(k+1)} = s^{(k)} + \kappa_j u_j$ satisfies the inexactness w.r.t subproblem $i$ since
\begin{displaymath}
\| y_i - \mathcal{A}_i(s^{(k)} + \kappa_j u_j) \| = \| y_i - \mathcal{A}_is^{(k)} - \kappa_j \mathcal{A}_i \mathcal{A}_j^* w_j\| = \| y_i - \mathcal{A}_i s^{(k)} \|^2.
\end{displaymath}
\end{proof}

\begin{lemma}
If all suboperators are pairwise orthogonal and if further 
\begin{equation}
w_i\langle \mathcal{A}_i \mathcal{A}_i^*w_i, w_i \rangle = \| w_i \|^2\mathcal{A}_i u_i, \label{eq:AiAistar}
\end{equation}
then the metric projection is the optimal update scheme and allows ReSeSOp to converge within one step. Equation \eqref{eq:AiAistar} holds in particular for all $w_i$ if $\mathcal{A}_i \mathcal{A}_i^* = I$.
\end{lemma}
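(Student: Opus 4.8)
The plan is to reduce the statement to a single equality-case-of-Cauchy--Schwarz identity and then to recognise that \eqref{eq:AiAistar} is exactly that identity. First I would invoke the preceding lemma: since the suboperators are pairwise orthogonal, the stepsizes decouple and the full system \eqref{eq:quadratic_optimality_form} collapses to the scalar condition \eqref{eq:simplified_quadratic_optimality_form} for each $i$ separately. It therefore suffices to fix one subproblem and to verify that the metric-projection stepsize read off from \eqref{eq:projection_simplified}, namely $\kappa_i = \|w_i\|\big(\|w_i\|-\mathcal{E}_i\big)/\|u_i\|^2$, is a root of that scalar quadratic. Orthogonality then guarantees that the simultaneous update does not interfere across subproblems, so all constraints are met at once and ReSeSOp reaches $\mathcal{D}$ in a single step.

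The key computation is to substitute this $\kappa_i$ into the residual $\|\,w_i-\kappa_i\mathcal{A}_i u_i\,\|^2$ and ask when it equals $\mathcal{E}_i^2$. Here I would use the two identities that follow from $u_i=\mathcal{A}_i^*w_i$, namely $\|u_i\|^2=\langle w_i,\mathcal{A}_i\mathcal{A}_i^*w_i\rangle=\langle w_i,\mathcal{A}_i u_i\rangle$. Expanding the square and cancelling the common factor $(\|w_i\|-\mathcal{E}_i)^2$ (the already-converged case $\|w_i\|=\mathcal{E}_i$ being trivial), the required equality reduces to $\|w_i\|^2\|\mathcal{A}_i u_i\|^2=\|u_i\|^4=\langle w_i,\mathcal{A}_i u_i\rangle^2$, which is precisely the equality case of the Cauchy--Schwarz inequality for the pair $w_i$ and $\mathcal{A}_i u_i$.

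It then remains to identify \eqref{eq:AiAistar} with this equality case. Rewriting its left-hand side via $\mathcal{A}_i\mathcal{A}_i^*w_i=\mathcal{A}_i u_i$ and $\langle\mathcal{A}_i\mathcal{A}_i^*w_i,w_i\rangle=\|u_i\|^2$ turns \eqref{eq:AiAistar} into $\|u_i\|^2 w_i=\|w_i\|^2\mathcal{A}_i u_i$, i.e.\ $\mathcal{A}_i u_i$ is a scalar multiple of $w_i$; this is exactly the collinearity that forces Cauchy--Schwarz to hold with equality, so the reduced identity is satisfied and the metric projection drives the residual to $\mathcal{E}_i$ after one step. For the final assertion I would simply note that $\mathcal{A}_i\mathcal{A}_i^*=I$ gives $\mathcal{A}_i u_i=w_i$, so the collinearity — and hence \eqref{eq:AiAistar} — is automatic for every $w_i$.

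The main obstacle I anticipate is conceptual rather than computational: the metric projection is by construction a projection onto the bounding \emph{hyperplane} of the stripe, so a priori it guarantees only the linear relation $|\langle u_i,s^{(k+1)}\rangle-\langle w_i,y_i\rangle|=\mathcal{E}_i\|w_i\|$ rather than the genuine quadratic residual condition $\|\mathcal{A}_i s^{(k+1)}-y_i\|=\mathcal{E}_i$. The content of the lemma is that condition \eqref{eq:AiAistar} is precisely what bridges this gap, converting the Cauchy--Schwarz inequality hidden in the discrepancy into an equality; isolating this gap and showing that \eqref{eq:AiAistar} removes it is the crux of the argument.
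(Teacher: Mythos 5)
Your proof is correct and follows essentially the same route as the paper's: reduce to the per-subproblem scalar condition \eqref{eq:simplified_quadratic_optimality_form} via the orthogonality lemma, substitute the metric-projection stepsize $\kappa_i=\|w_i\|(\|w_i\|-\mathcal{E}_i)/\|u_i\|^2$, and use \eqref{eq:AiAistar} together with $\|u_i\|^2=\langle \mathcal{A}_i\mathcal{A}_i^*w_i,w_i\rangle$ to collapse the residual to $\mathcal{E}_i^2$. Your reading of the hypothesis as the collinearity of $w_i$ and $\mathcal{A}_iu_i$, i.e.\ the equality case of Cauchy--Schwarz, is a slightly more explicit expansion than the paper's direct substitution, but it is the same computation.
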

\begin{proof}
For orthogonal operators it is sufficient to find stepsizes $\kappa_i$ which satisfy equation \eqref{eq:simplified_quadratic_optimality_form}. Therefore, we test our candidate the metric projection given in equation \eqref{eq:projection_simplified} and receive
\begin{equation}
\| y_i - \mathcal{A}_i(s^{(k)} + \kappa_i u_i) \|^2 = \| w_i - \frac{\|w_i\|^2 - \mathcal{E}_i \|w_i\|}{\|u_i\|^2}\mathcal{A}_iu_i \|^2.\label{eq:Ei_equals_}
\end{equation}
By using
\begin{displaymath}
w_i\|u_i\|^2 = w_i\langle \mathcal{A}_i \mathcal{A}_i^*w_i, w_i \rangle,
\end{displaymath}
together with the prerequisite, the right-hand side of \eqref{eq:Ei_equals_} reduces to $\mathcal{E}_i^2$. 
%the equation holds.
This means the metric projection is the optimal update scheme. 
\end{proof}

According to the lemma and its proof, the optimal stepsizes in \ref{eq:quadratic_optimality_form} could be even computed explicitly if the inexactness levels were known exactly. 

There exist individual imaging applications, which indeed result in orthogonal suboperators, for instance in cartesian MRI with only one single receive coil. However, in general, orthogonality of suboperators is a rather rare occurrence in imaging problems. 

Therefore, we next discuss a weaker condition on suboperators, namely \emph{linear independence}, which occurs, for instance, in sparse MRI with multiple receiver coils or subsampled CT. In this case, data is acquired without redundancy.

\subsubsection{Linearly independent subproblems} 

For the following analysis, we consider the discrete case with $\mathcal{A} \in \mathbb{C}^{M\times N}$.

\begin{lemma}\label{lem:lin_indep_rows}
If all rows of $\mathcal{A}$ are linearly independent, each $u_i$ can be extracted from the gradient $u^\Sigma$ of the full problem
\begin{displaymath}
u^\Sigma := \mathcal{A}^*(\mathcal{A} s - y) = \sum_{i=1}^{N^\mathrm{dir}} u_i.
\end{displaymath}
\end{lemma}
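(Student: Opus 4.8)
The plan is to reduce the extraction claim to the injectivity of $\mathcal{A}^*$, after first disposing of the summation identity, which is purely structural. Writing $\mathcal{A}$ as the vertical stacking of the blocks $\mathcal{A}_i$ and $y$ as the corresponding stacking of the $y_i$, the full residual decomposes blockwise as $\mathcal{A} s - y = (w_i)_{i=1}^{N^\mathrm{dir}}$ with $w_i = \mathcal{A}_i s - y_i$, while $\mathcal{A}^*$ is the corresponding horizontal stacking $(\mathcal{A}_1^*, \dots, \mathcal{A}_{N^\mathrm{dir}}^*)$. Hence
\begin{displaymath}
	u^\Sigma = \mathcal{A}^*(\mathcal{A} s - y) = \sum_{i=1}^{N^\mathrm{dir}} \mathcal{A}_i^* w_i = \sum_{i=1}^{N^\mathrm{dir}} u_i,
\end{displaymath}
which already gives the displayed identity. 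The actual content of the lemma is that the individual summands $u_i$ can be recovered from the single vector $u^\Sigma$.

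Next I would translate the hypothesis that all rows of $\mathcal{A}$ are linearly independent into the statement that $\mathcal{A}$ has full row rank $M$, equivalently $\ker \mathcal{A}^* = \{0\}$, i.e. the linear map $w \mapsto \mathcal{A}^* w$ is injective. Consequently the stacked residual $w = \mathcal{A} s - y$ is uniquely determined by $u^\Sigma = \mathcal{A}^* w$; reading off its $i$-th block $w_i$ and forming $u_i = \mathcal{A}_i^* w_i$ recovers every search direction, which is precisely the desired extraction. To make this constructive I would note that full row rank also renders the Gram matrix $\mathcal{A}\mathcal{A}^* \in \mathbb{C}^{M\times M}$ invertible, so that $\mathcal{A} u^\Sigma = \mathcal{A}\mathcal{A}^* w$ yields the explicit formula $w = (\mathcal{A}\mathcal{A}^*)^{-1}\mathcal{A}\, u^\Sigma$; partitioning this $w$ into its blocks and applying each $\mathcal{A}_i^*$ then gives a closed-form expression for the $u_i$.

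I do not expect a genuine obstacle here, only a subtlety in bookkeeping: one must keep the two stacking conventions consistent (vertical for $\mathcal{A}$ and $y$, horizontal for $\mathcal{A}^*$) so that the block $w_i$ of the recovered $w$ matches the $w_i$ that defines $u_i$. The conceptual reason behind the computation, which I would state alongside it, is the subspace viewpoint: each $u_i$ lies in $\mathrm{range}(\mathcal{A}_i^*)$, and full row rank forces these ranges to be linearly independent, so that $\bigoplus_i \mathrm{range}(\mathcal{A}_i^*)$ is a genuine direct sum. This direct-sum property is exactly what guarantees that the decomposition $u^\Sigma = \sum_i u_i$ is unique and hence invertible.
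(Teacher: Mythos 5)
Your proposal is correct and follows essentially the same route as the paper: both recover the stacked residual via $w = (\mathcal{A}\mathcal{A}^*)^{-1}\mathcal{A}\,u^\Sigma$ (justified by the linearly independent rows), read off the blocks $w_i$, and reapply $\mathcal{A}_i^*$ to obtain each $u_i$. The additional remarks on the injectivity of $\mathcal{A}^*$ and the direct-sum structure of the ranges $\mathrm{range}(\mathcal{A}_i^*)$ are sound and simply make explicit why the decomposition is unique.
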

\begin{proof}
Since $\mathcal{A}^*$ has linearly independent columns, the Moore-Penrose inverse exists. We hence can compute 
\begin{displaymath}
(\mathcal{A}\mathcal{A}^*)^{-1} \mathcal{A} u^\Sigma = \mathcal{A} s - y.
\end{displaymath}
In data space, the separation into subproblems is just the respective ordering, i.e. 
\begin{displaymath}
\mathcal{A} s - y = (w_1, \ldots , w_{N^\mathrm{dir}})^\intercal.
\end{displaymath}
Now we can reapply the adjoint of the $i$-th subproblem on the $i$-th component respectively to get
\begin{displaymath}
\mathcal{A}^*_i (\mathcal{A}_i s - y_i) = u_i.
\end{displaymath}
Hence, we can extract each $u_i$ from $u^\Sigma$ as asserted. %into $(u_i)_i$.
\end{proof}

The proof reveals, that the statement of lemma \ref{lem:lin_indep_rows} holds true under slightly weaker conditions. 

\begin{corollary}
If each line of a suboperator is linearly independent of all lines of other suboperators, we can extract all $u_i$ from the gradient of the full problem $u^\Sigma := \mathcal{A}^*(\mathcal{A} s - y) $
\end{corollary}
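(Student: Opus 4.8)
The plan is to mimic the proof of Lemma \ref{lem:lin_indep_rows} almost verbatim, the only change being that I first pass to a full-row-rank \emph{reduction} of each suboperator. The obstruction to applying the lemma directly is clear: once the rows \emph{within} a single suboperator $\mathcal{A}_i$ are allowed to be linearly dependent, the Gram matrix $\mathcal{A}\mathcal{A}^*$ is singular and the inverse $(\mathcal{A}\mathcal{A}^*)^{-1}$ used in the lemma no longer exists. The observation that rescues the argument is that $u_i=\mathcal{A}_i^*w_i$ always lies in the row space $R_i:=\operatorname{range}(\mathcal{A}_i^*)$ of the $i$-th block, and that this row space is insensitive to deleting redundant rows.

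Concretely, first I would select from each $\mathcal{A}_i$ a maximal linearly independent set of rows, forming a reduced block $\hat{\mathcal{A}}_i$ with $\operatorname{range}(\hat{\mathcal{A}}_i^*)=R_i$. Stacking these gives $\hat{\mathcal{A}}=(\hat{\mathcal{A}}_1;\dots;\hat{\mathcal{A}}_{N^\mathrm{dir}})$. I would then invoke the hypothesis to conclude that $\hat{\mathcal{A}}$ has full row rank, so that $\hat{\mathcal{A}}\hat{\mathcal{A}}^*$ is invertible and the lemma's computation becomes available for $\hat{\mathcal{A}}$.

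The extraction then proceeds exactly as in the lemma. Since $u_i\in R_i=\operatorname{range}(\hat{\mathcal{A}}_i^*)$, there is a unique $\hat{w}_i$ with $u_i=\hat{\mathcal{A}}_i^*\hat{w}_i$ (uniqueness because $\hat{\mathcal{A}}_i^*$ is injective), whence $u^\Sigma=\sum_i u_i=\hat{\mathcal{A}}^*\hat{w}$ with $\hat{w}=(\hat{w}_1,\dots,\hat{w}_{N^\mathrm{dir}})$. Applying the reduced normal equation as in Lemma \ref{lem:lin_indep_rows} recovers $\hat{w}=(\hat{\mathcal{A}}\hat{\mathcal{A}}^*)^{-1}\hat{\mathcal{A}}u^\Sigma$; splitting $\hat{w}$ according to the block ordering and reapplying $\hat{\mathcal{A}}_i^*$ to the $i$-th component returns $u_i=\hat{\mathcal{A}}_i^*\hat{w}_i$. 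This exhibits each $u_i$ as a function of $u^\Sigma$, which is the assertion.

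The main obstacle I anticipate is not the computation but the correct reading of the hypothesis. The phrase ``each line of a suboperator is linearly independent of all lines of other suboperators'' must be understood as the statement that the row spaces $R_i$ are in direct sum, equivalently that a union of bases of the $R_i$ is linearly independent, equivalently that $\hat{\mathcal{A}}$ has full row rank; a naive pairwise reading is strictly weaker and does not suffice. I would therefore spell out this equivalence carefully, and I would also remark that although the reduced blocks $\hat{\mathcal{A}}_i$ are not canonical, the recovered $u_i$ is, since it equals the component of $u^\Sigma$ along $R_i$ in the direct-sum decomposition $\sum_i R_i$ and hence does not depend on which maximal independent set of rows was chosen.
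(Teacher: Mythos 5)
Your proof is correct, and it reaches the conclusion by a mildly but genuinely different route than the paper. The paper's own argument (given in the sentence introducing the corollary and the note following it) keeps the full, rank-deficient operator: it observes that any preimage of $u^\Sigma$ under $\mathcal{A}^*$ determines each residual only up to a component $w_i^{\mathrm{ker}}\in\ker(\mathcal{A}_i^*)$, and that this ambiguity is annihilated when $\mathcal{A}_i^*$ is reapplied, so $u_i=\mathcal{A}_i^*w_i^{\mathrm{im}}$ is still recovered exactly. You instead eliminate the within-block redundancy up front by passing to full-row-rank reductions $\hat{\mathcal{A}}_i$ and then run Lemma \ref{lem:lin_indep_rows} verbatim on the reduced system; this avoids any pseudoinverse and gives a clean well-definedness argument, since $u_i$ is identified as the canonical $R_i$-component of $u^\Sigma$ in the direct sum $\bigoplus_i R_i$. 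Both arguments rest on the same essential point, which you make explicit and the paper leaves implicit: the hypothesis must be read as saying that the row spaces $R_i$ are jointly independent (equivalently, $\ker(\mathcal{A}^*)$ splits blockwise as $\bigoplus_i\ker(\mathcal{A}_i^*)$), and a merely pairwise reading is strictly weaker and would not suffice — your flagging of this is a genuine improvement in precision over the paper's formulation. The one thing the paper's formulation buys that yours discards is the original data-space indexing of the full operator, which the subsequent remark exploits to quantify, via the SVD and the bound $B_i$, how badly the extraction fails when the independence hypothesis does not hold.
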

Note, that each individual suboperator can consist of linearly dependent lines. In this case, the residual value $w_i = w_i^\mathrm{ker} + w_i^\mathrm{im}$ can only be recovered from $u^\Sigma$ up to some $w_i^\mathrm{ker} \in \mathrm{ker}(\mathcal{A}_i^*)$. Indeed, the computation of $u_i = \mathcal{A}_i^*(w_i^\mathrm{ker} + w_i^\mathrm{im}) = \mathcal{A}_i^*w_i^\mathrm{im}$ remains unaffected. In this case we say that a subproblem has intrinsic redundancies.

The crucial insight is that, theoretically, we can extract all search directions $\{u_i\}_{i=1}^{N^\mathrm{dir}}$ for the individual subproblems from $u^\Sigma$. 
Thus, a learned algorithm which is solely build around $u^\Sigma$ holds the same information as a learned algorithm which makes use of $\{u_i\}_{i=1}^{N^\mathrm{dir}}$. In this case, these two learned algorithms could in principle be seen as equally powerful. This means if infinitely powerful networks are used (a infinitely powerful network can represent any deterministic relation exactly), it does not matter if we provide search directions or just the gradient of the standard data consistency term, i.e $\mathrm{NN}(u^\Sigma) \longleftrightarrow \mathrm{NN}(\{u_i\}_i)$.

In practice, individual subproblems can have individual inexactnesses, and individually encoded motion artefacts. Therefore, decomposing can be necessary for adequate treatment. The task of decomposing $u^\Sigma$ into $\{u_i\}_{i=1}^{N^\mathrm{dir}}$ is, however, highly non-trivial. For the case of MRI, for instance, such a decomposition would require to compute a Fourier transform, identify the correct fractions of the k-space and to compute an inverse Fourier transform. This is a non-trivial task for which most networks are not powerful enough in practice. Hence, despite the above observation providing $\{u_i\}_{i=1}^{N^\mathrm{dir}}$ will be a benefit in training speed, and reachable accuracy and reliability for the reconstruction. This is exactly what we will observe in our results in Section \ref{section:Results}.

\subsubsection{Dependent subproblems}
In most practical applications, subproblems will \emph{not} be independent. Rigorously, we can define redundancies between subproblems via linear dependencies of suboperators. 
For example in case of parallel MRI a densely sampled part of the k-space will introduce dependent subproblems. This further applies to most CT acquisition protocols (more severely the finer the data resolution), to MPI with multi-frame measurements, to general time-dependent acquisitions with oversampling, etc. In light of subsection \ref{subsec:ReSeSOp_with_additional_reg} even the introduction of an additional regularizer $R$ can introduce dependencies. Hence, a unique decomposition of $u^\Sigma$ into $\{u_i\}_{i=1}^{N^\mathrm{dir}}$ is no more feasible. In other words: by providing only $u^\Sigma$, information can be cancelled out. Especially in case of motion, details and sharp edges can be lost. By providing all relevant search directions $\{u_i\}_{i=1}^{N^\mathrm{dir}}$ our proposed learned ReSeSOp algorithm has access to more information translated to the image space. Also in this case, we will see emphasised benefits of learned ReSeSOp in our numerical results in Section \ref{section:Results}.

\begin{remark}
The issue of linear dependencies (and hence redundancies) can be illustrated mathematically in terms of the singular value decomposition (SVD). If lines of the operator $\mathcal{A} = U \Sigma V^*$ are linearly dependent, then the diagonal matrix $\Sigma$ contains zeros. In that case the approximate inverse can be expressed as $\mathcal{A}^+ = V \Sigma^+ U^*$, where
\begin{equation*}
\Sigma^+ = \left(\begin{array}{cccc|c}
\frac{1}{\sigma_1} & 0 & \cdots & 0  & \\
0 & \ddots & \ddots &  \vdots & {\mbox{\large $0_{K,M-K}$}}\\
\vdots & \ddots & \ddots & 0 & \\ 
0 & \dots & 0 & \frac{1}{\sigma_K} & \\&&&&\\\hline&&&&\\ &  {\mbox{\large $0_{N-K, K}$}}  & & & {\mbox{\large $0_{N-K, M-K}$}}
\end{array}\right) \in \R ^{N \times M}
\end{equation*}
with the zero matrices $0_{p,q}\in\R^{p \times q}$ and $\mathrm{rank}(\mathcal{A}) = K$. If the rows of $\mathcal{A}$ are linearly dependent, applying the pseudoinverse to $u^\Sigma$ as in the proof of lemma \ref{lem:lin_indep_rows} provides approximations 
\begin{equation}
\Bar{u}_i = \mathcal{A}_i^* U_i \Sigma^+ V^* u^\Sigma \neq u_i . \nonumber
\end{equation}
The error can be expressed as
\begin{align}
\|u_i - \Bar{u}_i \| =& 
\left\| \mathcal{A}_i^* U_i I_M U^* (\mathcal{A}s - y) - \mathcal{A}_i^* U_i (\Sigma^+)^\intercal V^* V \Sigma^\intercal U^* (\mathcal{A}s - y) \right\| \nonumber \\ 
=& \left\| \mathcal{A}_i^* U_i \begin{pmatrix}
0 & 0 \\
0 & I_{M-K}
\end{pmatrix} U^* (\mathcal{A}s - y) \right\| \nonumber \\ 
\leq & \underbrace{\left\| \mathcal{A}_i^* U_i \begin{pmatrix}
	0 & 0 \\
	0 & I_{M-K}
\end{pmatrix} U^* \right\|}_{B_i} \| (\mathcal{A}s - y)  \|, \nonumber
\end{align}
where $U_i$ are the rows of $U$ corresponding to suboperator $\mathcal{A}_i$. The matrix norms $B_i$ could actually be computed for a given problem and a given set of subproblems, and hence could be used to assess the necessity to use subproblems. 

The problem size in practical applications, however, might prevent actual computations. For the computation of a SVD, the full forward matrix has to be available. For instance in MRI with $34$ receiver coils and a resolution of $384\times384$ pixel the subsampled cartesian forward matrix has a memory footprint of approximately $540$GB. Recall, that the fast Fourier transform is responsible that the full forward matrix never has to be computed for reconstruction purposes. For the CT operator, we can compute the matrix norm $B_0$ for a few setups to illustrate redundancies, shown in table \ref{tab:B_i_values}. The value of $B_i$ scales with the size of subproblem $i$. A dimensionless or relative indicator could be $\frac{B_i}{\|\mathcal{A}_i\|}$, also presented in table \ref{tab:B_i_values}.
This value can be interpreted as a measure of irrecoverable information, if inexactnesses are severely distinct.

\begin{table}[]
\centering
\begin{tabular}{l||cccc|cccccc|cc}%cc|cccc}

\hline
\hline
$N_\alpha$ & 80 & 80 & 80 & 80 & 160 & 160 & 160 & 160 & 160 & 160 & 80 & 80 \\%& 80 & 80 & 100 & 100 & 100 & 100 \\
$N_\mathrm{dtc}$ & 100 & 100 & 100 & 100 & 100 & 100 & 100 & 100 & 100 & 100 & 200 & 200 \\%& 200 & 200 & 200 & 200 & 200 & 200 \\
$\alpha_\mathrm{max}$ & $\pi$ & $\pi$ & $\frac{5}{4}\pi$ & $\frac{5}{4}\pi$ & $\pi$ & $\pi$ & $\pi$ & $\frac{5}{4}\pi$ & $\frac{5}{4}\pi$ & $\frac{5}{4}\pi$ & $\pi$ & $\pi$ \\%& $\frac{5}{4}\pi$ & $\frac{5}{4}\pi$ & $\pi$ & $\pi$ & $\frac{5}{4}\pi$ & $\frac{5}{4}\pi$ \\
$N^\mathrm{dir}$ & 4 & 5 & 4 & 5 & 4 & 5 & 40 & 4 & 5 & 40 & 4 & 5 \\%& 4 & 5 & 4 & 5 & 4 & 5 \\
$B_0$ & 7.07  & 7.07 & 266 & 266 & 160 & 149 & 92.1 & 385 & 381 & 147 & 173 & 160 \\%& 387 & 382 & 237 & 220 & 442 & 432 \\
$\|\mathcal{A}_0\|$ & 421 & 378 & 419 & 376 & 594 & 534 & 197 & 592 & 532 & 197 & 595 & 535 \\
$\frac{B_0}{\|\mathcal{A}_0\|}$ & \color{Green}0.02%6 
& \color{Green}0.02
& \color{Red}0.63
& \color{Red}0.71
& \color{Orange}0.27
& \color{Orange}0.28
& \color{Orange}0.47
& \color{Red}0.65
& \color{Red}0.72
& \color{Red}0.75
& \color{Orange}0.29
& \color{Orange}0.30 \\

\hline
\hline
\end{tabular}
\caption{Values of matrix norm $B_0$ for a variety of CT-setups with a resolution of $100\times100$ pixels. Subsampled CT has negligible redundancies (\color{Green}green\color{black}). Oversampling in number of angles ($N_\alpha$) or detector positions ($N_\mathrm{dtc}$) introduces redundancies(\color{Orange}orange\color{black}). If more than 180 degrees are sampled ($\alpha_\mathrm{max}>\pi$), redundancies are severe (\color{Red}red\color{black}) and splitting into subproblems is highly recommended.}
\label{tab:B_i_values}
\end{table}

\end{remark}

\section{Results}\label{section:Results}

We investigate the performance of the learned ReSeSOp algorithm on various dynamic test cases from different imaging modalities, more precisely 
\begin{itemize}
\item on a simulated CT acquisition of a rheological flow through porous media (test case 1),
\item on the fastMRI dataset\cite{zbontar2019,knoll2020} where we add synthetically rigid motion and simulate a $4\times$ accelerated CARTESIAN k-space trajectory (test case 2), as well as a golden angle RADIAL acquisition (test case 3),
\item on real-time cardiac MRI measurements using a spiral k-space trajectory intrinsically containing non-rigid motion (test case 4). 
\end{itemize}
Thus, our test cases range from simulated data and motion to real dynamic measurements, they include rigid as well as highly non-linear deformations (test cases 1 and 4) and they comprise data with varying degrees of redundancy (for instance small redundancies in CT versus strong redundancies in radially sampled MRI).

We compare our proposed learned ReSeSOp algorithm to the learned primal (LP) network \cite{adler2018} and GANs \cite{usman2020}. To emphasise the structural benefits of including search directions instead of the standard data consistency gradient, we want to make the comparison as fair as possible. Hence, the primal network as well as the generator of the GAN are chosen exactly equal to the U-Net part of our learned ReSeSOp network. Also the iteration depth of learned primal and learned ReSeSOp is chosen equally. In some cases, the adversarial training of the GAN did not lead to satisfactory results. In such cases, we hence provide results of a U-Net trained with conventional non-adversarial loss function.

\subsection{The simulated rheological flow CT case (test case 1)}

We simulate the CT acquisition of a rheological stationary flow in porous media. The fluid is assumed to be incompressible. The Euler equations hence reduce to
\begin{equation}
\frac{\dd v_x}{\dd r_x} = - \frac{\dd v_y}{\dd r_y}, \nonumber %\qquad \forall (r_x, r_y)^\intercal \in \Omega 
\end{equation}
where $v = (v_x, v_y)^\intercal$ with $v_x, v_y: \mathbb{R}^2 \rightarrow \mathbb{R}$ is the flow field and $(r_x, r_y)$ parametrises the domain $\Omega$. We set the inflow and outflow to $1$ and employ Neumann conditions $v_x' = v_y' = 0$ for other boundaries. 

The data acquisition is simulated using the Radon transform %CT operator 
as introduced in equation \eqref{eq:CT_forward} with 406 receiver points and 192 angles equally distributed over 180 degrees. The solution is computed on a $288\times288$ regular grid. The Radon transform is discretised via bilinear interpolation.

The	results are quantitatively presented in table \ref{tab:reco_quality_rheoFlowCT} and visually in figure \ref{fig:reconstructions_rheoFlowCT}. We see that learned ReSeSOp provides a reconstruction with sharp contours and without motion blur as occurs in the static FBP reconstruction. In light of the other test cases and the initialization of our learned ReSeSOp algorithm, we also included the static CG reconstruction.

Since this is a simulated test case, the ground truth image as well as the deformation fields are available. This means that in this test case, we can compare the performance of our learned ReSeSOp and the classical ReSeSOp algorithm from \cite{blanke2020} with the ground truth estimates $\mathcal{E}_i$ according to equation \eqref{eq:inexactnesses}. Since our learned ReSeSOp is based on considering subproblems based on the time variable (i.e. each index $i$ corresponds to one time instance), we use the same subproblem structure within the classical ReSeSOp algorithm. Figure 2 d) reveals that even with ground truth estimates for the time-dependent model inexactness at hand, classic ReSeSOp can only provide a blurred reconstruction. The reason is that the deformation in this test case is highly non-linear, i.e. the good motion compensation properties of classic ReSeSOp as demonstrated in \cite{blanke2020} would require to split the problem $\mathcal{A}s=y$ into subproblems depending on time and detector points which would result in high computational costs. By learning the stepsizes $\{\kappa_i^{(k)}\}_{i}$ for each iteration step along with the additional regularization $R^{(k)}$, our approach significantly reduces this computational complexity while providing a sharp and motion artefact free reconstruction. This highlights the efficacy of our learned ReSeSOp algorithm.

Furthermore, figure \ref{fig:reconstructions_rheoFlowCT} shows the performance of learned ReSeSOp in comparison to other learning based reconstruction methods. We note that the considered CT data acquisition and motion result in subtle redundancies. Hence, according to our discussion in Section \ref{subsec:ReSeSOp_w_wo_redundncies}, we expect a slight improvement by learned ReSeSOp since it explicitly exploits individual subproblems and hence a respective decomposition of the gradient of the full data discrepancy functional. Indeed, we observe that learned ReSeSOp provides visually and quantitatively better results.

\newcommand{\specialcell}[2][c]{%
\begin{tabular}[#1]{@{}c@{}}#2\end{tabular}}

\begin{table}
\centering
\begin{tabular}{ l||l|l|l|l|l|l } 
\hline
\hline

Method & FBP & CG & \specialcell{classical \\ ReSeSOp} & U-Net & learned primal & \specialcell{learned \\ ReSeSOp} \\
\hline
\hline

SSIM         & 0.3844 & 0.3947 & 0.2688 & 0.7946 & 0.8525 & \textbf{0.8701} \\
PSNR         & 14.32  & 14.61  & 14.44  & 18.45  & 20.02  & \textbf{20.56}  \\
MSE          & 27652  & 25880  & 26827  & 10634  & 7444   & \textbf{6569}   \\

\hline
\hline
\end{tabular}
\caption{Reconstruction quality of rheological flow CT case with 406 receiver points and 192 angles distributed over 180 degrees. Reconstructions of resolution $288\times288$, $N^\mathrm{dir} = 16$}
\label{tab:reco_quality_rheoFlowCT}
\end{table}

\begin{figure}
	\centering
	\captionsetup[subfigure]{margin=-3cm}
	\begin{subfigure}[b]{0.3\textwidth}
		\begin{tikzpicture}[spy using outlines={white,magnification=2,size=2.4cm, connect spies}]
			\node {\pgfimage[height=1.0\textwidth]{figures/learned_ReSeSOp_rheoFlow_ground_truth.png}};
			\spy on (1.1,0.6) in node [left] at (5,1.3);
		\end{tikzpicture}
		\caption*{a) ground truth}
	\end{subfigure}
	\hspace{0.05cm}
	\begin{subfigure}[b]{0.3\textwidth}
		\centering
		\begin{tikzpicture}[spy using outlines={black,magnification=2,size=2.4cm, connect spies}]
			\node {\pgfimage[height=1.0\textwidth]{figures/learned_ReSeSOp_rheoFlow_flow_thicker_cut.png}};
			\spy on (1.1,0.6) in node [left] at (-2.6,-1.3);
		\end{tikzpicture}
		\caption*{$\qquad \qquad \qquad \qquad \qquad \qquad \qquad \qquad$b) stationary flow field}
	\end{subfigure}\hspace{2.1cm} \hfill
	\\
	\begin{subfigure}[b]{0.3\textwidth}
		\begin{tikzpicture}[spy using outlines={white,magnification=2,size=2.4cm, connect spies}]
			\node {\pgfimage[height=1.0\textwidth]{figures/learned_ReSeSOp_rheoFlow_FBP.png}};
			\spy on (1.1,0.6) in node [left] at (5,1.3);
		\end{tikzpicture}
		\caption*{c) static FBP}
	\end{subfigure}
	\hspace{0.05cm}
	\begin{subfigure}[b]{0.3\textwidth}
		\centering
		\begin{tikzpicture}[spy using outlines={white,magnification=2,size=2.4cm, connect spies}]
			\node {\pgfimage[height=1.0\textwidth]{figures/learned_ReSeSOp_rheoFlow_CG.png}};
			\spy on (1.1,0.6) in node [left] at (-2.6,-1.3);
		\end{tikzpicture}
		\caption*{$\qquad \qquad \qquad \qquad \qquad \qquad \qquad \qquad$d) static CG }
	\end{subfigure}\hspace{2.1cm} \hfill
	\\
	\begin{subfigure}[b]{0.3\textwidth}
		\begin{tikzpicture}[spy using outlines={white,magnification=2,size=2.4cm, connect spies}]
			\node {\pgfimage[height=1.0\textwidth]{figures/learned_ReSeSOp_rheoFlow_classicReSeSOp.png}};
			\spy on (1.1,0.6) in node [left] at (5,1.3);
		\end{tikzpicture}
		\caption*{e) classical ReSeSOp }
	\end{subfigure}
	\hspace{0.05cm}
	\begin{subfigure}[b]{0.3\textwidth}
		\centering
		\begin{tikzpicture}[spy using outlines={white,magnification=2,size=2.4cm, connect spies}]
			\node {\pgfimage[height=1.0\textwidth]{figures/learned_ReSeSOp_rheoFlow_UNet.png}};
			\spy on (1.1,0.6) in node [left] at (-2.6,-1.3);
		\end{tikzpicture}
		\caption*{$\qquad \qquad \qquad \qquad \qquad \qquad \qquad \qquad$ f) U-Net}
	\end{subfigure}\hspace{2.1cm} \hfill
	\\
	\begin{subfigure}[b]{0.3\textwidth}
		\begin{tikzpicture}[spy using outlines={white,magnification=2,size=2.4cm, connect spies}]
			\node {\pgfimage[height=1.0\textwidth]{figures/learned_ReSeSOp_rheoFlow_LP.png}};
			\spy on (1.1,0.6) in node [left] at (5,1.3);
		\end{tikzpicture}
		\caption*{g) learned primal}
	\end{subfigure}
	\hspace{0.05cm}
	\begin{subfigure}[b]{0.3\textwidth}
		\centering
		\begin{tikzpicture}[spy using outlines={white,magnification=2,size=2.4cm, connect spies}]
			\node {\pgfimage[height=1.0\textwidth]{figures/learned_ReSeSOp_rheoFlow_LR_new.png}};
			\spy on (1.1,0.6) in node [left] at (-2.6,-1.3);
		\end{tikzpicture}
		\caption*{$\qquad \qquad \qquad \qquad \qquad \qquad \qquad \qquad$h) learned ReSeSOp}
	\end{subfigure}\hspace{2cm} \hfill
	
	\caption{Comparison of rheological Flow CT case. Resolution: $288 \times 288$, $N^\mathrm{dir} = 16$, 192 angles distributed over 180 degrees and 406 receiver points.}
	\label{fig:reconstructions_rheoFlowCT}
\end{figure}
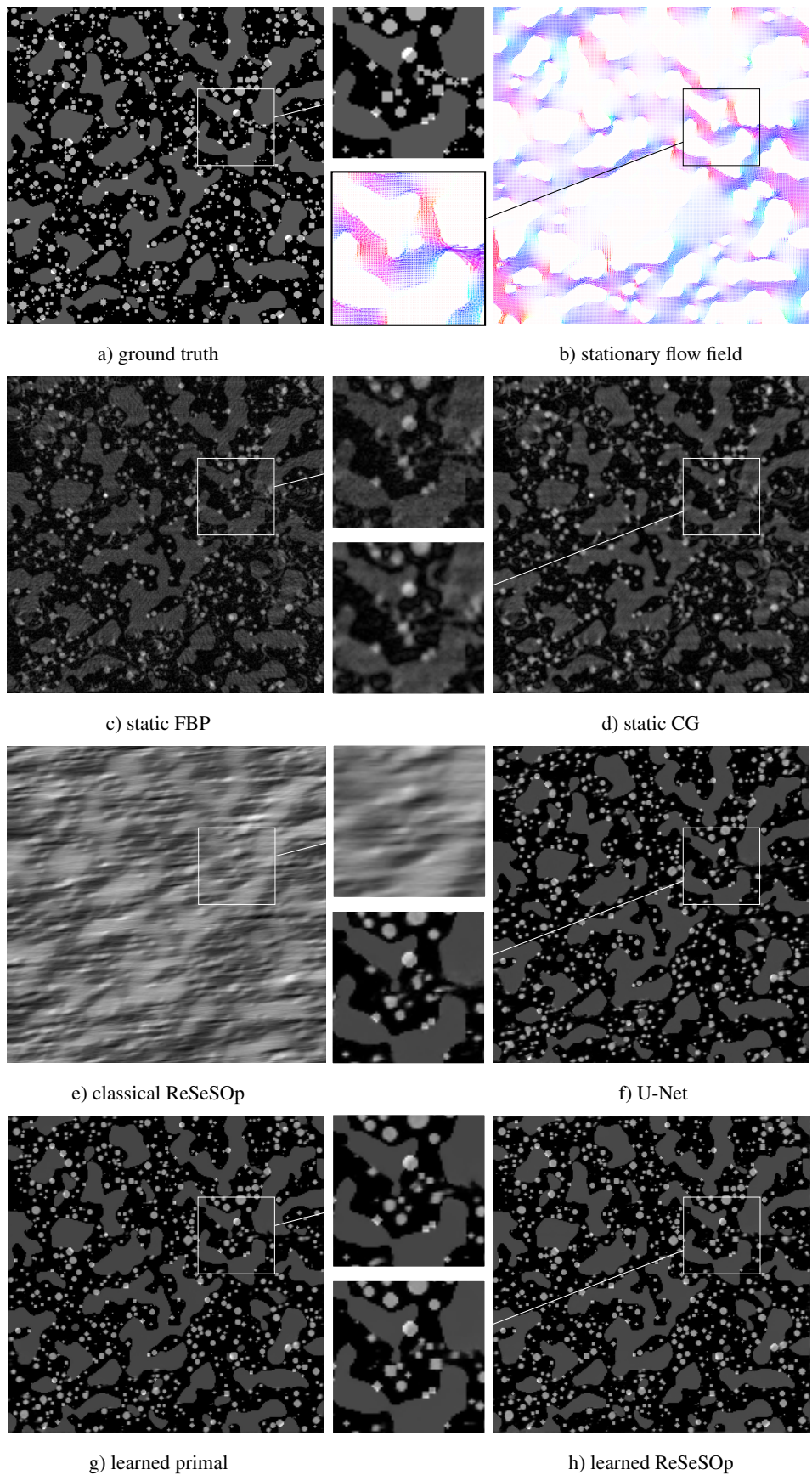

\subsection{The dynamic brain MRI case}

Next, we use the fastMRI data set which, however, does not yet contain motion corrupted data. Thus, we synthetically add rigid motion to each acquired brain, cf. \cite{feinler2024}. For this purpose we first compute accurate coil sensitivities using Espirit \cite{uecker2014}. Using these coil sensitivities we compute a complex-valued TV reconstruction from the static datasets. Using deformation fields, we can deform these reconstructions, apply the forward operator \eqref{eq:MRI_forward} and add measurement noise. Subsequently, we can subsample the generated data by using the standard $4\times$ regular cartesian subsampling, or, using a non-uniform Fast Fourier transforms (NUFFT), we can even simulate non-cartesian sampling schemes like Radial or Spiral acquisitions. Due to multiple receiver coils, the operator features redundancies in fully sampled regions. In case of radial acquisitions, the oversampling in k-space centre produce strong redundancies. This emphasises the necessity to employ learned ReSeSOp.

\subsubsection{CARTESIAN subsampling with non-uniformly distributed motion model (test case 2)}
Empirically, we do not know if and in which direction the subject will rotate or shift during the measurement. An appropriate probabilistic description of this scenario is a uniformly distributed random rigid deformation. To counter motion artefacts, motion estimation algorithms are usually utilised. However, the quality of estimates can significantly vary. The accuracy predominantly depends on the k-space position of the respective data fraction. Hence, the remaining inexactness is, probabilistically speaking, not equally distributed. We therefore imitate the result of a motion estimation algorithm for cartesian sequences. In this scenario, motion estimation algorithms manage to estimate motion particularly well on fully sampled regions in the centre of k-space, but (usually) fail for subsampled regions in the periphery of k-space, see e.g. \cite{feinler2024, rizzuti2022, cordero2016}. We model the extend of motion (remaining after motion estimation) by $u_x, u_y \in k_p \cdot \mathcal{U}(-8,8)$, $k_p = 0.1 + |p|$, where $p\in (-1,1)$ represents the position in k-space with $p=0$ denoting the centre of k-space, and $\alpha \in k_p \cdot \mathcal{U}(-6,6)$ degrees. In the following, we refer to this deformation as \emph{non-uniformly distributed}, a visualization is included in figure \ref{fig:ux_uy_alpha_over_i} b).

\begin{figure}
\centering
\begin{subfigure}[b]{0.45\textwidth}
\begin{tikzpicture}
\begin{axis}[
tick align=outside,
tick pos=left,
height=5cm,width = \textwidth, x grid style={darkgray176}, xlabel=$i$,
xtick style={color=black}, y grid style={darkgray176}, xmin=1, xmax=15, ymin=-8, ymax=8, xtick={2,4,6,8,10,12,14},
ytick style={color=black}, legend style={at={(0.675,0.0)}, anchor=south east, draw=none, fill=none}
]
\addplot [thick, Cyan]
table {
	1 -4.8765697
	2 -5.959059
	3 0.10938048
	4 -5.321519
	5 -1.2153745
	6 -0.5159986
	7 -2.8231802
	8 0.0
	9 -0.71831775
	10 -3.8025465
	11 -4.951071
	12 -0.6342201
	13 0.5817342
	14 -3.609818
	15 -0.53585434
};
\addplot [thick, Magenta]
table {
	1 -0.7812115
	2 2.4187663
	3 0.93331516
	4 3.293572
	5 0.11657746
	6 -0.92280185
	7 -1.2358662
	8 0.0
	9 3.238035
	10 3.0219238
	11 3.9334722
	12 3.9874337
	13 0.51513195
	14 1.3352209
	15 -3.641048 
};
\addplot [thick, YellowOrange]
table {
	1 1.2129245
	2 -0.6663921
	3 -2.2568185
	4 -3.729623
	5 -3.3692107
	6 1.4515619
	7 -2.0383325
	8 0.0
	9 -0.6869024
	10 -1.3552427
	11 -1.4911263
	12 1.887475
	13 -1.0432199
	14 -0.1462609
	15 0.9168919
};
\legend{$u_x$, $u_y$, $\alpha$}
\end{axis}
\end{tikzpicture}
\caption{uniformly distributed}
\end{subfigure}
\begin{subfigure}[b]{0.45\textwidth}
\begin{tikzpicture}
\begin{axis}[
tick align=outside,
tick pos=left,
height=5cm,width = \textwidth, x grid style={darkgray176}, xlabel=$i$,
xtick style={color=black}, y grid style={darkgray176}, xmin=1, xmax=15, ymin=-8, ymax=8, xtick={2,4,6,8,10,12,14},
ytick style={color=black}, legend style={at={(0.675,0.0)}, draw=none, fill=none, anchor=south east}
]
\addplot [thick, Cyan]
table {  
	1 -3.831692
	2 5.179036
	3 3.346246
	4 4.3318725
	5 -2.105997
	6 1.3380637
	7 0.3730418
	8 0.0
	9 -0.61205703
	10 -1.591821
	11 -0.49832338
	12 0.81513315
	13 1.4252806
	14 4.6252856
	15 -3.3502166
};
\addplot [thick, Magenta]
table {
	1 -6.6381073
	2 0.7382556
	3 7.1181
	4 -2.3353214
	5 5.170974
	6 2.1249824
	7 0.4566533
	8 0.0
	9 2.3288448
	10 -1.4805346
	11 2.6195774
	12 2.0040436
	13 6.7294893
	14 7.2899494
	15 -6.259389
};
\addplot [thick, YellowOrange]
table { 
	1 -0.7117014
	2 1.3318703
	3 4.7579494
	4 0.89422214
	5 4.3112946
	6 3.5837579
	7 0.93993276
	8 0.0
	9 1.1788621
	10 1.5894072
	11 2.8227277
	12 2.3795452
	13 -2.3199346
	14 -0.15363841
	15 -3.2819774
};
\legend{$u_x$, $u_y$, $\alpha$}
\end{axis}

\end{tikzpicture}
\caption{non-uniformly distributed}
\end{subfigure}
\caption{Visualization of one representative set of parameters for \emph{uniformly} and \emph{non-uniformly} distributed deformation. The deformation at $i=8$ is designed as the reference configuration. Consequently, we only visualize $\alpha_i - \alpha_8$, and analogously for other parameters.}
\label{fig:ux_uy_alpha_over_i}
\end{figure}
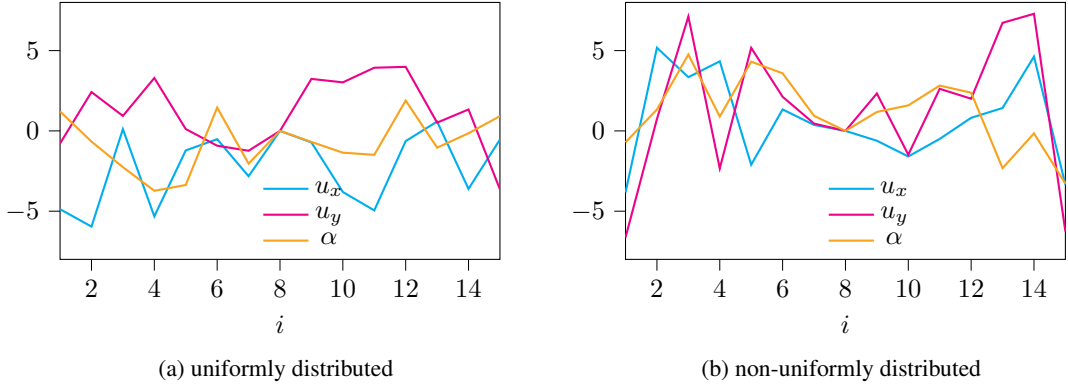

\newcommand{\appropto}{\mathrel{\vcenter{
\offinterlineskip\halign{\hfil$##$\cr
\propto\cr\noalign{\kern2pt}\sim\cr\noalign{\kern-2pt}}}}}

The accelerated CARTESIAN trajectory features redundancies due to the densely sampled k-space centre. Therefore, learned ReSeSOp has access to slightly more information compared to learned primal. This is confirmed by the qualitative results, shown in figure \ref{fig:reconstructions_dynamic_cartesian} and the quantitative results, shown in table \ref{tab:reco_quality}. Since the predictions made by GAN are consistently inferior to those of learned primal, we have excluded it from the qualitative results. Learned ReSeSOp converges in less epochs and to a slightly more accurate reconstruction than learned primal. Details are better recovered and results are more reliable indicated by overall lower error metrics on the test data set. Further, the reconstruction satisfies the ReSeSOp solution space far more accurately i.e. $\|w_i(s^\mathrm{pred})\| \approx \mathcal{E}_i$ for all $i=1,\ldots,N^\mathrm{dir}$, shown in figure \ref{fig:delta_over_N_U}. In contrast, the prediction of learned primal approximately satisfies $\|w_i(s^\mathrm{pred})\| \appropto \|y_i\|$ ignoring individual inexactnesses. 

For completeness, we include cartesian sampling with the uniformly distributed deformation i.e. without motion estimation, in our quantitative results.

\subsubsection{RADIAL subsampling with uniformly distributed motion model (test case 3)}
For the radial sampling case, we analogously assume that motion can occur uniformly over time. Even if motion estimation algorithms are used, prediction errors are approximately uniformly distributed in this case. Hence, we can always describe (remaining) motion as a random rigid deformation for each subproblem, uniformly distributed in time. I.e the object can be shifted by $u_x, u_y \in \mathcal{U}(-4,4)$ pixels and rotated by $\alpha \in \mathcal{U}(-3,3)$ degrees. We refer to this deformation as \emph{uniformly distributed}, a visualization is shown in figure \ref{fig:ux_uy_alpha_over_i} a).

The RADIAL trajectory provides significant redundancies. By including individual search directions with respect to all subproblems, learned ReSeSOp can selectively weight data contributions according to the ReSeSOp solution space. Qualitative results are shown in figure \ref{fig:reconstructions_dynamic_radial}, quantitative results are shown in table \ref{tab:reco_quality}. The plots in figure \ref{fig:delta_over_N_U} illustrate that our learned ReSeSOp approach provides solutions satisfying the ReSeSOp solution space with high accuracy in comparison to the learned primal method. The ground truth inexactnesses $\mathcal{E}_i$ are implicitly well approximated by the reconstruction $s^\mathrm{pred}$.

It is known, that trained medical doctors can \emph{read} from motion affected reconstructions. We expect that the performance of these doctors is similar to the performance of learned primal or image based methods like GANs. The learned ReSeSOp algorithm surpasses this bound without the need for explicit motion estimation, which is typically computationally expensive.

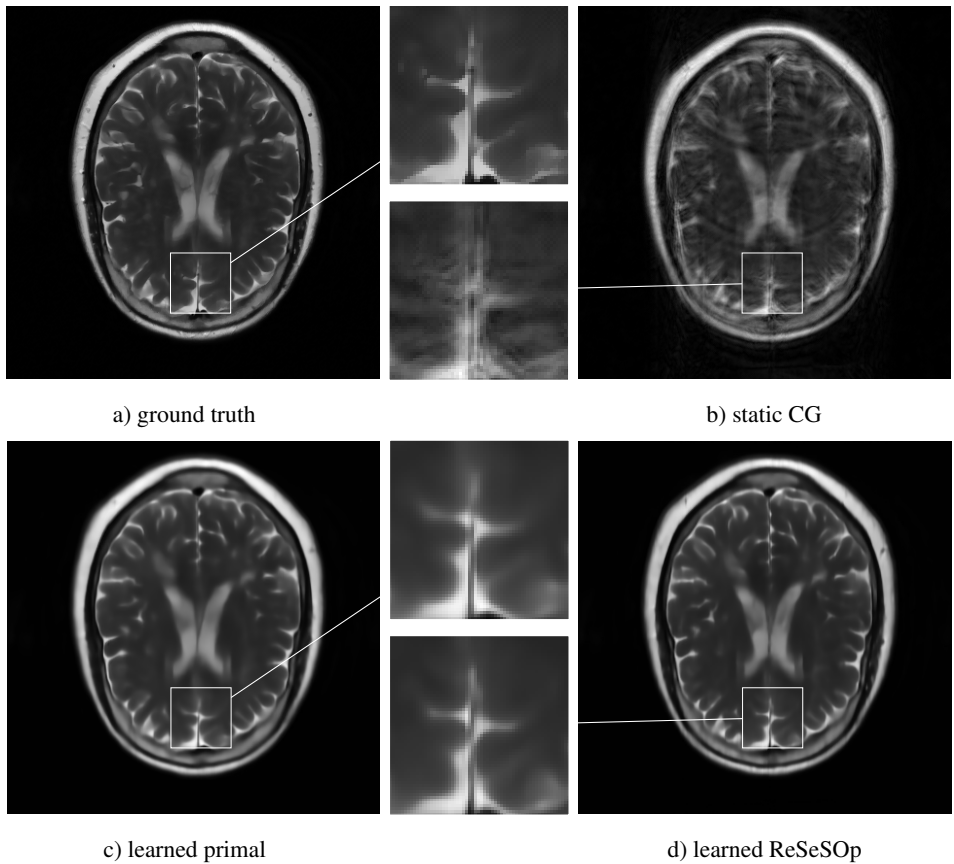
\begin{figure}
	\centering
	\captionsetup[subfigure]{margin=-3cm}
	
	\begin{subfigure}[b]{0.3\textwidth}
		\begin{tikzpicture}[spy using outlines={white,magnification=3,size=2.4cm, connect spies}]
			\node {\pgfimage[height=1.0\textwidth]{figures/learned_ReSeSOp_dynamic_CART_GT.png}};
			\spy on (0.1,-1.2) in node [left] at (5,1.3);
		\end{tikzpicture}
		\caption*{a) ground truth}
	\end{subfigure}
	\hspace{0.05cm}
	\begin{subfigure}[b]{0.3\textwidth}
		\centering
		\begin{tikzpicture}[spy using outlines={white,magnification=3,size=2.4cm, connect spies}]
			\node {\pgfimage[height=1.0\textwidth]{figures/learned_ReSeSOp_dynamic_CART_CG.png}};
			\spy on (0.1,-1.2) in node [left] at (-2.6,-1.3);
		\end{tikzpicture}
		\caption*{$\qquad \qquad \qquad \qquad\qquad \qquad \qquad \qquad$b) static CG}
	\end{subfigure}\hspace{2.1cm} \hfill
	\\
	\begin{subfigure}[b]{0.3\textwidth}
		\begin{tikzpicture}[spy using outlines={white,magnification=3,size=2.4cm, connect spies}]
			\node {\pgfimage[height=1.0\textwidth]{figures/learned_ReSeSOp_dynamic_CART_LP.png}};
			\spy on (0.1,-1.2) in node [left] at (5,1.3);
		\end{tikzpicture}
		\caption*{c) learned primal}
	\end{subfigure}
	\hspace{0.05cm}
	\begin{subfigure}[b]{0.3\textwidth}
		\centering
		\begin{tikzpicture}[spy using outlines={white,magnification=3,size=2.4cm, connect spies}]
			\node {\pgfimage[height=1.0\textwidth]{figures/learned_ReSeSOp_dynamic_CART_LReSeSOp.png}};
			\spy on (0.1,-1.2) in node [left] at (-2.6,-1.3);
		\end{tikzpicture}
		\caption*{$\qquad \qquad \qquad \qquad\qquad \qquad \qquad \qquad$d) learned ReSeSOp}
	\end{subfigure}\hspace{2cm} \hfill
	
	\caption{Comparison on dynamic case with CARTESIAN trajectory of learned ReSeSOp, CG and learned primal to the ground truth. Resolution : $384\times384$, $N^\mathrm{dir}=15, N^\mathrm{coils} = 15$. non-uniformly distributed deformations}
	\label{fig:reconstructions_dynamic_cartesian}
\end{figure}

\begin{figure}
	\centering
	\captionsetup[subfigure]{margin=-3cm}
	\begin{subfigure}[b]{0.3\textwidth}
		\begin{tikzpicture}[spy using outlines={white,magnification=3,size=2.4cm, connect spies}]
			\node {\pgfimage[height=1.0\textwidth]{figures/learned_ReSeSOp_dynamic_RADIAL_GT.png}};
			\spy on (0.1,-1.2) in node [left] at (5,1.3);
		\end{tikzpicture}
		\caption*{a) ground truth$\qquad \qquad \quad$}
	\end{subfigure}
	\hspace{0.05cm}
	\begin{subfigure}[b]{0.3\textwidth}
		\centering
		\begin{tikzpicture}[spy using outlines={white,magnification=3,size=2.4cm, connect spies}]
			\node {\pgfimage[height=1.0\textwidth]{figures/learned_ReSeSOp_dynamic_RADIAL_CG.png}};
			\spy on (0.1,-1.2) in node [left] at (-2.6,-1.3);
		\end{tikzpicture}
		\caption*{$\qquad \qquad \qquad \qquad \qquad \qquad \qquad \qquad $ b) static CG}
	\end{subfigure}\hspace{2.1cm} \hfill
	\\
	\begin{subfigure}[b]{0.3\textwidth}
		\begin{tikzpicture}[spy using outlines={white,magnification=3,size=2.4cm, connect spies}]
			\node {\pgfimage[height=1.0\textwidth]{figures/learned_ReSeSOp_dynamic_RADIAL_LP.png}};
			\spy on (0.1,-1.2) in node [left] at (5,1.3);
		\end{tikzpicture}
		\caption*{c) learned primal}
	\end{subfigure}
	\hspace{0.05cm}
	\begin{subfigure}[b]{0.3\textwidth}
		\centering
		\begin{tikzpicture}[spy using outlines={white,magnification=3,size=2.4cm, connect spies}]
			\node {\pgfimage[height=1.0\textwidth]{figures/learned_ReSeSOp_dynamic_RADIAL_LReSeSOp.png}};
			\spy on (0.1,-1.2) in node [left] at (-2.6,-1.3);
		\end{tikzpicture}
		\caption*{$\qquad \qquad \qquad \qquad \qquad \qquad \qquad \qquad $ d) learned ReSeSOp}
	\end{subfigure}\hspace{2cm} \hfill
	
	\caption{Comparison on dynamic case with RADIAL trajectory of learned ReSeSOp, CG and learned primal to the ground truth. Resolution : $384\times384$, $N^\mathrm{dir}=15, N^\mathrm{coils} = 15$. uniformly distributed deformations}
	\label{fig:reconstructions_dynamic_radial}
\end{figure}

\begin{table}
\centering
\begin{tabular}{ l||l|l|l|l } 
\hline
\hline

Method & CG & learned primal & learned ReSeSOp & GAN \\
\hline
\hline
\multicolumn{5}{c}{CARTESIAN, $4\times$ accelerated, $N^\mathrm{dir}=15$, non-uniformly} \\
\hline

SSIM         & 0.7979 & 0.9496 & \textbf{0.9541} & 0.9100 \\
PSNR         & 28.07 & 33.68 & \textbf{34.561} & 29.90\\
MSE          & 320.1 & 71.07 & \textbf{60.345} & 191.4\\

\hline
\multicolumn{5}{c}{CARTESIAN, $4\times$ accelerated, $N^\mathrm{dir}=15$, uniformly} \\
\hline

SSIM         & 0.7432 & 0.9306 & \textbf{0.9436} & 0.8851 \\%U-Net: xxx0.9057 \\
PSNR         & 25.89 & 31.54 & \textbf{32.82} & 28.40 \\%U-Net: xxx29.42 \\
MSE          & 543.6 & 120.6 & \textbf{89.14} & 255.43 \\%U-Net: xxx202.3 \\

%\hline
%\multicolumn{5}{c}{RADIAL, 180 spokes, golden angle, $N^\mathrm{dir}=15$, non-uniformly} \\
%\hline

%SSIM         & 0.6619 & 0.8949 & \textbf{0.9614} & 0.8536\\
%PSNR         & 26.77 & 28.56 & \textbf{36.34} & 26.55\\
%MSE          & 402.1 & 248.1 & \textbf{42.33} & 400.8\\

\hline
\multicolumn{5}{c}{RADIAL, 180 spokes, golden angle, $N^\mathrm{dir}=15$, uniformly} \\
\hline

SSIM         & 0.6724 & 0.8770 & \textbf{0.9540} & 0.8452\\
PSNR         & 25.93 & 27.69 & \textbf{35.01} & 26.14\\
MSE          & 487.1 & 312.3 & \textbf{56.13} & 467.1\\

\hline
\hline
\end{tabular}
\caption{Reconstruction quality of dynamic brain MRI case for CARTESIAN and RADIAL sampling with moderate and strong redundancies, respectively. Resolution of reconstruction: $384\times 384$ pixel.}
\label{tab:reco_quality}
\end{table}

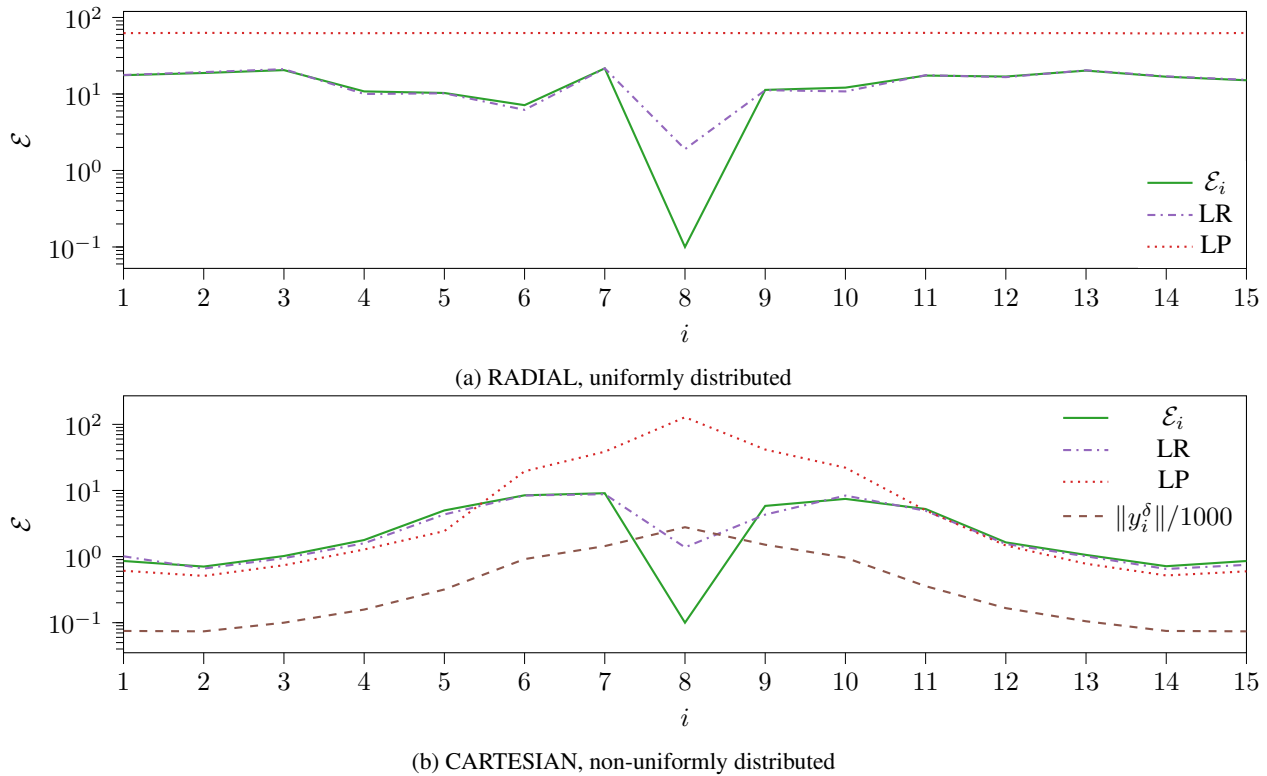
\begin{figure}
\centering
\begin{subfigure}[b]{\textwidth}
\begin{tikzpicture}

\definecolor{crimson2143940}{RGB}{214,39,40}
\definecolor{darkgray176}{RGB}{176,176,176}
\definecolor{darkorange25512714}{RGB}{255,127,14}
\definecolor{darkturquoise23190207}{RGB}{23,190,207}
\definecolor{forestgreen4416044}{RGB}{44,160,44}
\definecolor{goldenrod18818934}{RGB}{188,189,34}
\definecolor{gray127}{RGB}{127,127,127}
\definecolor{mediumpurple148103189}{RGB}{148,103,189}
\definecolor{orchid227119194}{RGB}{227,119,194}
\definecolor{sienna1408675}{RGB}{140,86,75}
\definecolor{steelblue31119180}{RGB}{31,119,180}

\begin{semilogyaxis}[
tick align=outside,
tick pos=left,
height=5cm,width = \textwidth, x grid style={darkgray176}, xlabel=$i$,
xtick style={color=black}, y grid style={darkgray176}, ylabel=$\mathcal{E}$, xmin=1, xmax=15,
ytick style={color=black}, legend style={at={(1,0.0)}, anchor=south east, draw=none}
]
\addplot [thick, forestgreen4416044]
table {
	1 17.6
	2 18.8
	3 20.5
	4 10.8
	5 10.3
	6 7.14
	7 21.6
	8 0.1
	9 11.3
	10 12.1
	11 17.4
	12 16.9
	13 20.2
	14 16.8
	15 15.1
};
\addplot [thick, dashdotted, mediumpurple148103189]
table {
	1 17.7
	2 19.3
	3 21.1
	4 10.0
	5 10.2
	6 6.2
	7 21.6
	8 1.9
	9 11.2
	10 10.8
	11 17.6
	12 16.6
	13 20.4
	14 17.0
	15 15.3
};
\addplot [thick, dotted, crimson2143940]
table {
	1 62.4
	2 62.9
	3 62.4
	4 62.3
	5 62.6
	6 62.6
	7 62.6
	8 62.8
	9 62.3
	10 62.4
	11 62.9
	12 62.3
	13 62.5
	14 61.7
	15 62.7
};

\legend{$\mathcal{E}_i$, LR, LP}
\end{semilogyaxis}

\end{tikzpicture}
\caption{RADIAL, uniformly distributed}

\end{subfigure}

\begin{subfigure}[b]{\textwidth}
\begin{tikzpicture}

\definecolor{crimson2143940}{RGB}{214,39,40}
\definecolor{darkgray176}{RGB}{176,176,176}
\definecolor{darkorange25512714}{RGB}{255,127,14}
\definecolor{darkturquoise23190207}{RGB}{23,190,207}
\definecolor{forestgreen4416044}{RGB}{44,160,44}
\definecolor{goldenrod18818934}{RGB}{188,189,34}
\definecolor{gray127}{RGB}{127,127,127}
\definecolor{mediumpurple148103189}{RGB}{148,103,189}
\definecolor{orchid227119194}{RGB}{227,119,194}
\definecolor{sienna1408675}{RGB}{140,86,75}
\definecolor{steelblue31119180}{RGB}{31,119,180}

\begin{semilogyaxis}[
tick align=outside,
tick pos=left,
height=5cm,width = \textwidth, x grid style={darkgray176}, xlabel=$i$,
xtick style={color=black}, y grid style={darkgray176}, ylabel=$\mathcal{E}$, xmin=1, xmax=15,
ytick style={color=black}, legend style={at={(1,1)}, draw=none, fill=none}
]
\addplot [thick, forestgreen4416044]
table {
	1 0.858
	2 0.706
	3 1.02
	4 1.78
	5 4.99
	6 8.45
	7 9.09
	8 0.1
	9 5.84
	10 7.46
	11 5.24
	12 1.64
	13 1.06
	14 0.715
	15 0.857
};
\addplot [thick, dashdotted, mediumpurple148103189]
table {
	1 1.01
	2 0.659
	3 0.946
	4 1.59
	5 4.34
	6 8.40
	7 8.78
	8 1.38
	9 4.31
	10 8.37
	11 4.93
	12 1.55
	13 1.01
	14 0.65
	15 0.75
};
\addplot [thick, dotted, crimson2143940]
table {
	1 0.61
	2 0.51
	3 0.74
	4 1.28
	5 2.43
	6 19.5
	7 38.6
	8 128.2
	9 41.6
	10 22.1
	11 5.00
	12 1.47
	13 0.779
	14 0.518
	15 0.598
};
\addplot [thick, dashed, sienna1408675]
table {
	1 0.075
	2 0.074
	3 0.10
	4 0.158
	5 0.318
	6 0.916
	7 1.441
	8 2.781
	9 1.511
	10 0.963
	11 0.357
	12 0.166
	13 0.105
	14 0.075
	15 0.074
};
\legend{$\mathcal{E}_i$, LR, LP, $\|y^\delta_i\|/1000$}
\end{semilogyaxis}

\end{tikzpicture}
\caption{CARTESIAN, non-uniformly distributed}
\end{subfigure}
\caption{Comparison of data consistency for dynamic brain MRI case with learned ReSeSOp (LR) and learned primal (LP) to the ground truth $\mathcal{E}_i$. We set the reference at $i=8$, i.e. $\mathcal{E}_8 = 0$ in case of no measurement noise. Due to visualization issues with log-scales we plot $\mathcal{E}_8 = 0.1$. }
\label{fig:delta_over_N_U}
\end{figure}

\subsection{The real-time cardiac MRI case (test case 4)} 
We are equipped with data of real-time cardiac MRI using an undersampled spiral k-space trajectory, provided to us by the authors of \cite{kleineisel2022}. In \cite{kleineisel2022}, the authors first computed a fully sampled reference using 40 spiral arms. The fully sampled data is hereby achieved by binning continuously acquired data over time. The authors proposed to use a variational network for reconstruction. The training was performed on real time measured data, leaving only 10 spiral arms per bin. This allows even free-breathing acquisitions. The variational network was able to compensate subsampling artefacts and produced reconstructions with good quality. The comparison to a low rank plus sparse method \cite{eirich2021} however indicates that it is beneficial to add temporally neighbouring arms and assign an appropriate number of subproblems to eliminate remaining in-frame motion smoothness.

This setup perfectly suits the application of learned ReSeSOp. Our results are shown in table \ref{tab:reco_quality_cardiacMRI} and in figure \ref{fig:reconstructions_cardiacMRI}. We see that the proposed method can indeed make use of the neighbouring arms to improve the overall reconstruction quality without compromising sharpness. The best result is achieved by learned ReSeSOp using 20 spiral arms and 4 subproblems. The results further show, that learned primal cannot split the gradient into contributions per subproblems sufficiently well. The reconstruction quality is severely reduced if 20 spiral arms are used for learned primal. Visually, this manifests in reduced sharpness of the heart. If, however, only 10 spiral arms are used, learned ReSeSOp and learned primal can be seen as quasi equivalent. Since the ground truth is calculated by binning, these 10 spiral arms are \emph{equally weighted} for the computation of the ground truth. Hence, no benefits from using multiple subproblems below the temporal resolution of ground truths can be expected. 

The authors in \cite{kleineisel2022} stated, that an even finer temporal resolution would be beneficial for a better determination of systolic and diastolic volumes of the heart. However, with only 5 spiral arms, the variational network was not able to compensate for subsampling artefacts. Using the learned ReSeSOp framework, the temporal resolution could indeed be doubled by using a sliding window approach and an appropriate number of subproblems (e.g. 5). For appropriate training, however, finer binned ground truths are required, and is therefore left for future work.

\begin{figure}
	\centering
	\captionsetup[subfigure]{margin=-3cm}
	\begin{subfigure}[b]{0.3\textwidth}
		\begin{tikzpicture}[spy using outlines={white,magnification=1.5,size=2.4cm, connect spies}]
			\node {\pgfimage[height=1.0\textwidth]{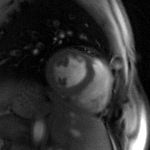}};
			\spy on (0.4,-0.4) in node [left] at (5,1.3);
		\end{tikzpicture}
		\caption*{a) Reference, 40 arms (binned)}
	\end{subfigure}
	\hspace{0.05cm}
	\begin{subfigure}[b]{0.3\textwidth}
		\centering
		\begin{tikzpicture}[spy using outlines={white,magnification=1.5,size=2.4cm, connect spies}]
			\node {\pgfimage[height=1.0\textwidth]{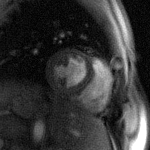}};
			\spy on (0.4,-0.4) in node [left] at (-2.6,-1.3);
		\end{tikzpicture}
		\caption*{$\qquad \qquad \qquad \qquad \qquad \qquad \qquad \qquad $b) static CG, 20 arms} %MSE: 614.801
	\end{subfigure}\hspace{2.1cm} \hfill
	\\
	\begin{subfigure}[b]{0.3\textwidth}
		\begin{tikzpicture}[spy using outlines={white,magnification=1.5,size=2.4cm, connect spies}]
			\node {\pgfimage[height=1.0\textwidth]{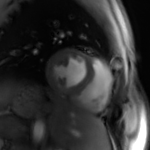}};
			\spy on (0.4,-0.4) in node [left] at (5,1.3);
		\end{tikzpicture}
		\caption*{c) learned primal, 20 arms} %MSE: 614.801
	\end{subfigure}
	\hspace{0.05cm}
	\begin{subfigure}[b]{0.3\textwidth}
		\centering
		\begin{tikzpicture}[spy using outlines={white,magnification=1.5,size=2.4cm, connect spies}]
			\node {\pgfimage[height=1.0\textwidth]{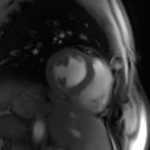}};
			\spy on (0.4,-0.4) in node [left] at (-2.6,-1.3);
		\end{tikzpicture}
		\caption*{$\qquad \qquad \qquad \qquad \qquad \qquad \qquad \qquad $d) learned ReSeSOp, 20 arms} %MSE: 482.81693
	\end{subfigure}\hspace{2cm} \hfill
	
	\caption{Comparison on cardiac MRI case. Resolution: $448\times448$, $N^\mathrm{dir} = 4$, $N^\mathrm{coils} = 30-34$. Results are zoomed to the heart and brightened for better visualisation.}
	\label{fig:reconstructions_cardiacMRI}
\end{figure}

\begin{table}
\centering
\begin{tabular}{ l||l|l|l|l } 
\hline
\hline
Method & CG & learned primal & learned ReSeSOp & UNet \\
\hline
\hline

\multicolumn{5}{c}{10 arms} \\
\hline

SSIM         & 0.7812 & \textbf{0.9562} & 0.9549 & 0.9427 \\
PSNR         & 33.09 & 41.11 & \textbf{41.15} & 39.55 \\
MSE          & 28090 & \textbf{4391} & 4414 & 6207 \\

\hline
\multicolumn{5}{c}{20 arms} \\
\hline

SSIM         & 0.7866 & 0.9518 & \textbf{0.9582} & 0.9417 \\
PSNR         & 33.15 & 41.10 & \textbf{41.85} & 39.84 \\
MSE          & 26876 & 4360 & \textbf{3734} & 5861 \\

\hline
\hline
\end{tabular}
\caption{Reconstruction quality of spiral cardiac MRI with 40 arms reference. Each arm consists of 1408 k-space points. Resolution of reconstruction: $448\times448$ pixel. For learned ReSeSOp, we use 5 subproblems in case of 10 arms, and 4 subproblems in case of 20 arms.}
\label{tab:reco_quality_cardiacMRI}
\end{table}

\section{Conclusion}\label{section:Conclusion}

We presented a learned ReSeSOp algorithm which computes reconstructions and inexactnesses at the same time. In particular, our method (along with its results) is based on and in accordance with the theory of ReSeSOp. The numerical results reveal that the proposed method performs particularly well on motion affected data with redundant sampling schemes. The results clearly surpass the quality of comparable methods like GAN and the learned primal method. 

While we did not make use of any explicit motion estimation scheme, learned ReSeSOp can be used in connection with a preceding motion estimation procedure. The estimated deformations can be introduced into the forward model. Hence, learned ReSeSOp only has to correct for remaining motion artefacts, cf. test case 2. This can further increase accuracy and reliability of the reconstruction. 

Some reconstructed edges of learned ReSeSOp as well as learned primal are slightly too soft. This is especially the case for the accelerated cartesian sequences. This could be improved if these unrolled iterative networks are trained with an adversarial contribution c.f. \cite{usman2020,armanious2018b}. Such predictions are expected to satisfy the ReSeSOp solution space but contain sharper details. The ReSeSOp solution space is, however, non-degenerate and therefore can contain several nice looking images with different details. In that sense the network will be allowed to \emph{fantasise} within the ReSeSOp solution space. At first glance, this sounds like a disadvantage. However, note, that this space encloses the accurate description of data consistency containing the ground truth. This is indeed the optimal result given inexact forward operators.

\section*{Acknowledgments}

The work of the authors was funded by Bundesministerium f\"ur Bildung und Forschung (BMBF) under grant 05M2020 and by Deutsche Forschungsgemeinschaft (DFG, German Research Foundation) under Germany´s Excellence Strategy – EXC 2075 –390740016. The second author was further supported by the Sino-German Mobility Programme (M-0187) by the Sino-German Center for Research Promotion.

\bibliographystyle{unsrt}
\bibliography{Literatur}

\end{document}